    \newcommand{\tr}{\mbox{tr}}
    \numberwithin{figure}{section}
    \numberwithin{table}{section}
    \def\E{\mathbb{E}}
    \newcommand{\bequ}{\begin{equation}}     \newcommand{\eequ}{\end{equation}}
    \newcommand{\benn}{\begin{equation*}}    \newcommand{\eenn}{\end{equation*}}
    \newcommand{\bbma}{\begin{bmatrix}}      \newcommand{\ebma}{\end{bmatrix}}
    \newcommand{\covar}{\mbox{Cov}}
    \newcommand{\trace}{\mbox{tr}}
    \newcommand{\R}{\mathbb{R}}
    \newcommand{\bsub}{\begin{subequations}}
    	\newcommand{\esub}{\end{subequations}}
    \newtheorem{thm}{Theorem}[section]
    \newtheorem{lem}[thm]{Lemma}
    \numberwithin{equation}{section}
    \newcommand{\comment}[1]{}
    \newcommand{\be}{\begin{equation}}
    \newcommand{\ee}{\end{equation}}
    \newcommand{\bea}{\begin{eqnarray}}
    \newcommand{\eea}{\end{eqnarray}}
    \newcommand{\beqa}{\begin{eqnarray}}
    \newcommand{\eeqa}{\end{eqnarray}}
    \newcommand{\beann}{\begin{eqnarray*}}
    	\newcommand{\eeann}{\end{eqnarray*}}
    \newcommand{\bmat}{\left[ \begin{array}}
    	\newcommand{\emat}{\end{array} \right]}
    \newcommand{\beq}{\begin{equation}}
    \newcommand{\eeq}{\end{equation}}
    \newcommand{\bproof}{\begin{description} \item[{\it Proof}.] ~ }
    	\newcommand{\eproof}{\hspace*{\fill}$\Box$\medskip \end{description}}
    \newcommand{\defeq}{\stackrel{\rm def}{=}}
    \newcounter{algo}[section]
    		\newcounter{prog}[section]
   				\title{Exact and Inexact Subsampled Newton Methods for Optimization}
   				\author{       
   					Raghu Bollapragada\thanks{Department of Industrial Engineering and Management Sciences, Northwestern University, 
   						Evanston, IL, USA. This author was supported by the Office of Naval Research award N000141410313. }
   					\and     
   					Richard Byrd \thanks{Department of Computer Science, University of Colorado,
   						Boulder, CO, USA.  This author was supported by National Science Foundation
   						grant DMS-1620070.}
   					\and
   					Jorge Nocedal \thanks{Department of Industrial Engineering and Management Sciences, Northwestern University, 
   						Evanston, IL, USA.  This author was supported by Department 
   						of Energy grant DE-FG02-87ER25047 and National Science Foundation grant DMS-1620022.} 
   				}
   				\date{\today}
\begin{document}

   					\maketitle
   					
   					\begin{abstract}
   						The paper studies the solution of stochastic optimization problems in which approximations to the gradient and Hessian are  obtained through subsampling. We first consider Newton-like methods that employ these approximations and discuss how to coordinate the accuracy in the gradient and Hessian to yield a superlinear rate of convergence in expectation. The second part of the paper analyzes an inexact Newton method that solves linear systems approximately using the conjugate gradient (CG) method, and that samples the Hessian and not the gradient (the gradient is assumed to be exact).  We provide a complexity analysis for this method based on the properties of the CG iteration and the quality of the Hessian approximation, and compare it with a method that employs a stochastic gradient iteration instead of the CG method.  We report preliminary numerical results that illustrate the performance of inexact subsampled Newton methods on machine learning applications based on logistic regression.
   					\end{abstract}

   					\bigskip\bigskip
   					\section{Introduction}
   					\setcounter{equation}{0}
   					In this paper we study subsampled Newton methods for stochastic optimization. These methods employ approximate  gradients and  Hessians, through sampling, in order to achieve efficiency and scalability. Additional economy of computation is obtained by solving  linear systems inexactly at every iteration, i.e., by implementing  inexact Newton methods. We study the convergence properties of (exact) Newton methods that approximate both the Hessian and gradient, as well as the complexity of inexact Newton methods that subsample only the Hessian and use the conjugate gradient method to solve linear systems.
   					
   					The optimization problem of interest arises in machine learning applications but with appropriate modifications is relevant to  other stochastic optimization settings including simulation optimization \cite{amaran2014simulation, fu2015handbook}. We state the  problem as
   					\begin{equation}\label{prob}
   					\min_{w \in \R^{d}}   F(w) = \int f(w; x,y) dP( x,y),
   					\end{equation}
   					where $f$ is the composition of a prediction function (parametrized by a vector $w \in \R^d$) and a smooth loss function, and $(x,y)$ are the input-output pairs with joint probability distribution $P(x,y)$.	We call $F$  the \emph{expected risk}.
   					
   					In practice, one does not have complete information about $P(x, y)$, and therefore one works with data $\{(x^i,y^i)\}$ drawn  from the distribution $P$. One may view an optimization algorithm as being applied directly to the expected risk \eqref{prob}, or given $N$ data
   					points,  as being applied to the \emph{empirical risk}  
   					\[
   					R(w) = \frac{1}{N} \sum_{i =1}^N f(w;x^i,y^i) .
   					\]
   					To simplify the notation, we define 
   					$
   					F_i(w) = f(w;x^i,y^i),
   					$
   					 and thus we write
   					\begin{equation}\label{probrr}
   					R(w) = \frac{1}{N} \sum_{i =1}^N F_i(w)
   					\end{equation}     
   					in the familiar finite-sum form arising in many applications beyond machine learning \cite{Bert95}. In this paper, we consider both objective functions, $F$ and $R$.
   					
   					The iteration of a subsampled Newton method for minimizing $F$ is given by
   					\begin{equation} \label{Newton-CG-intr}
   					w_{k+1} = w_k + \alpha_k p_k ,
   					\end{equation}
   					where $p_k$ is the solution of the \emph{Newton equations}
   					\begin{equation} \label{lins}
   					\nabla^2F_{S_k}(w_k)p_k = -\nabla F_{ X_k}(w_k) .
   					\end{equation}
   					Here, the subsampled gradient and Hessian are defined as
   					\begin{equation}   \label{subs}
   					\nabla F_{X_k}(w_k)=\frac{1}{|X_k|}\sum_{i \in X_k} \nabla F_i(w_k),  \qquad \nabla^2F_{S_k}(w_k)= \frac{1}{|S_k|}\sum_{i \in S_k} \nabla^2 F_i(w_k),
   					\end{equation}
   					where the sets $ X_k, S_k \subset\{1, 2, \ldots \}$ index sample points $ (x^i, y^i)$ drawn at random from the distribution $P$. We refer to $X_k$ and $S_k$ as the gradient and Hessian samples --- even though they only refer to indices of the samples. The choice of the sequences $\{X_k\}$ and $\{S_k\}$  gives rise to  distinct algorithms, and our  goal is to identify the most promising instances, in theory and in practice.
   					
   					In the first part of the paper, we consider Newton methods  in which the linear system \eqref{lins} is solved \emph{exactly}, and  we identify conditions on $\{S_{k}\}$ and $\{X_{k}\}$ under which linear or superlinear rates of convergence are achieved. Exact Newton methods are practical when the number of variables $d$ is not too large, or when the structure of the problem allows a direct factorization of the Hessian $ \nabla^2F_{S_k}$  in time linear in the number of variables $d$. 
   					
   					For most large-scale applications, however, forming the Hessian $\nabla^2F_{S_k}(w_k)$ and solving the linear system \eqref{lins} accurately is prohibitively expensive, and one has to compute an approximate solution in $O(d)$ time using an iterative linear solver that only requires Hessian-vector products (and not the Hessian itself).  Methods based on this strategy are sometimes called \emph{inexact Hessian-free Newton methods}. In the second part of the paper, we study how to balance the accuracy of this linear solver with the sampling technique used for the Hessian so as to obtain an efficient algorithm.  In doing so, we pay particular attention to the properties of two iterative linear solvers for \eqref{lins}, namely the conjugate gradient method (CG) and a stochastic gradient iteration (SGI). 
   					
   					It is generally accepted that in the context of deterministic optimization and when the matrix in \eqref{lins} is the exact Hessian, the CG method is the iterative solver of choice due to its notable convergence properties.  However, subsampled Newton methods provide a different setting where other iterative methods could be more effective.  For example, solving \eqref{lins} using a stochastic gradient iteration (SGI) has the potential advantage that the sample $S_k$ in \eqref{lins} can be changed at every iteration, in contrast with the Newton-CG method where it is essential to fix the sample $S_k$ throughout the CG iteration.
   					
   					It is then natural to ask: which of the two methods, Newton-CG or Newton-SGI, is more efficient, and how should this be measured? Following \cite{agarwal2016second}, we phrase this question by asking how much computational effort does each method require in order to yield a given local rate of convergence --- specifically, a linear rate with convergence constant of 1/2.

   					\medskip
   					\emph{N.B.} While this paper was being written for publication,  Xu et al.  \cite{xu2016sub} posted a paper that gives  complexity results that, at first glance, appear to be very similar to the ones presented in sections~\ref{inexactn} and \ref{complex} of this paper. They also compare the complexity of Newton-CG and Newton-SGI  by estimating the amount of work required to achieve linear convergence rate with constant 1/2. However, their analysis establishes convergence rates in probability, \emph{for one iteration}, whereas in this paper we prove convergence in expectation for the sequence of iterates. The two papers  differ in other respects. We give (in section~\ref{subnewton}) conditions on $S_k$ and $X_k$ that guarantee superlinear convergence, whereas  \cite{xu2016sub}  studies the effect of non-uniform sampling.

   					\subsection{Related Work}
   					Subsampled gradient  and Newton methods have recently received much attention. Friedlander and Schmidt \cite{friedlander2012hybrid} and Byrd et al. \cite{byrd2012sample} analyze the rate at which $X_k$ should increase so that the subsampled gradient  method (with fixed steplength) converges linearly to the solution of strongly convex problems.  Byrd et al. \cite{byrd2012sample} also provide work-complexity bounds for their method and report  results of experiments with  a subsampled Newton-CG method, whereas Friedlander and Schmidt \cite{friedlander2012hybrid}  study the numerical performance of L-BFGS using  gradient sampling techniques.  Martens \cite{Martens10} proposes  a subsampled Gauss-Newton method for training deep neural networks, and focuses on the choice of the regularization parameter. None of these  papers provide a convergence analysis for subsampled Newton methods.
   					
   					Pasupathy et al. \cite{2014pasglyetal} consider sampling rates in a more general setting. Given a deterministic algorithm --- that could have linear, superlinear or quadratic convergence --- they analyze the stochastic analogue that subsamples the gradient, and  identify optimal sampling rates for several families of algorithms.
   					Erdogdu and Montanari \cite{erdogdu2015convergence} study a Newton-like method where the Hessian approximation is obtained by first subsampling the true Hessian and then computing a truncated eigenvalue decomposition. Their method employs a full gradient and is designed for problems where  $d$ is not so large that the cost of iteration, namely  $O(Nd + |S| d^2)$, is affordable.  
   					
   					Roosta-Khorasani and Mahoney  \cite{roosta2016sub1,roosta2016sub}  derive global and local convergence rates for subsampled Newton methods with various sampling rates used for gradient and Hessian approximations. Our convergence results are similar to theirs, except that they employ matrix concentration inequalities \cite{tropp_computational_2010} and state progress at each iteration in probability --- whereas we do so in expectation.  \cite{roosta2016sub} goes beyond other studies in the literature in that they assume the objective function is strongly convex but the individual component functions are only weakly convex, and show how to ensure that the subsampled matrices are invertible (in probability).
   					
   					Pilanci and Wainwright \cite{pilanci2015newton}  propose a Newton sketch method that approximates the Hessian via random projection matrices while employing the full gradient of the objective. The best complexity results are obtained when the projections are performed using the randomized Hadamard transform.  This method requires access to the square root of the true Hessian, which, for generalized linear models entails access to the entire dataset at each iteration.  
   					
   					Agarwal et al. \cite{agarwal2016second} study a Newton method that aims to compute unbiased estimators of the inverse Hessian and that achieves a linear time complexity in the number of variables. Although they present the method as one that computes a power expansion of the Hessian inverse, they show that it can also be interpreted as a subsampled Newton method where the step computation is performed inexactly using a stochastic gradient iteration. As mentioned above, Xu et al.  \cite{xu2016sub} study the convergence properties of inexact subsampled Newton methods that employ non-uniform sampling.


   					\bigskip\noindent  
   					\subsection{Notation} We denote the variables of the optimization problem by $w \in \R^d$, and a minimizer of the objective $F$ as $w^*$. Throughout the paper, we use  $\| \cdot \|$ to represent the $\ell_2$ vector norm or its induced matrix norm. The notation $A \preceq B$ means that $B-A$ is a symmetric and positive semi-definite matrix.   
    		
\section{Subsampled Newton Methods}    \label{subnewton}
The problem under consideration in this section is the minimization of expected risk \eqref{prob}.
The general form of an (exact) subsampled Newton method for this problem is given by
\begin{equation} \label{iterate}
w_{k+1} = w_k - \alpha_{k}\nabla^2F_{S_k}^{-1}(w_k)\nabla F_{X_k}(w_k) ,
\end{equation}
where $\nabla^2F_{S_k}(w_k)$ and $\nabla F_{X_k}(w_k)$ are defined in \eqref{subs}.  We assume that the sets $\{X_k\}, \{S_k\} \subset \{1, 2, \ldots \}$ are chosen independently  (with or without replacement). 

The steplength $\alpha_{k}$ in \eqref{iterate} is chosen to be a constant or  computed by a backtracking line search; we do not consider the case of diminishing steplengths, $\alpha_{k} \rightarrow 0$, which leads to a sublinear rate of convergence.
It is therefore clear that, in order to obtain convergence to the solution, the sample size $X_k$ must increase, and in order to obtain a rate of convergence that is faster than linear, the Hessian sample $S_k$ must also increase. We now investigate the rules for controlling those samples.
We assume in our analysis that the objective function $F$ is strongly convex. For the sake of brevity, we make the stronger assumption that each of the component functions $F_i$ is strongly convex, which can be ensured through regularization. (This stipulation on the $F_{i}$ is relaxed e.g. in \cite{roosta2016sub}).
The main set of assumptions made in this paper is as follows. \\

\noindent   {\bf Assumptions I}
\begin{enumerate}
	\item [A1] {\bf(Bounded Eigenvalues of Hessians)} The function $F$ is twice continuously differentiable and any subsampled Hessian is positive definite with eigenvalues lying in a positive interval (that depends on the sample size). That is, for any integer $\beta$ and any set $S \subset \{1,2,\ldots \}$ with $|S|=\beta$, there exist positive constants $ \mu_{\beta}, L_{\beta}$ such that
	\begin{equation} \label{eigen}
	\mu_{\beta} I   \preceq \nabla^2 F_{S}(w) \preceq L_{\beta} I, \quad \forall w \in \R^d.
	\end{equation}
	Moreover, there exist  constants $\bar \mu, \bar L$ such that 
	\begin{equation}  \label{bars}
	0 < \bar \mu \leq  \mu_\beta \quad\mbox{and} \quad L_\beta \leq \bar{L} < \infty, \quad\mbox{for all  } \beta \in \mathbb{N}.
	\end{equation}
	The smallest and largest eigenvalues corresponding to the  objective $F$ are denoted by $\mu, L$  (with $0< \mu, L < \infty)$,
	 i.e.,
	\begin{equation}  \label{truel}
	\mu I \preceq \nabla^2 F(w) \preceq L I , \quad \forall w \in \R^d.
	\end{equation}
	
	\item [A2] {\bf(Bounded variance of sample gradients)} The trace of the covariance matrix of the individual sample gradients is uniformly bounded, i.e., there exists a constant $v$ such that 
	\begin{equation} \label{Var}
	\tr \left(\covar \left(\nabla F_i(w) \right)\right) \leq v^2, \qquad \forall w \in \R^d. 
	\end{equation}
	\item [A3] {\bf(Lipschitz Continuity of Hessian)} The Hessian of the objective function $F$ is Lipschitz continuous, 
	i.e., there is a constant $M >0$ such that 
	\begin{equation} \label{liphes}
	\|\nabla^2 F(w) - \nabla^2 F(z)\| \leq M\|w - z\|, \qquad \forall w,z  \in \R^d .
	\end{equation}
	\item [A4] {\bf(Bounded Variance of Hessian components)} 
	There is a constant $\sigma$ such that, for all component Hessians, we have 
	\begin{equation}   \label{bvhes}
	\|\E[(\nabla^2 F_i(w) - \nabla^2 F(w))^2]\| \leq \sigma^2, \qquad \forall w \in \R^d.
	\end{equation}    
\end{enumerate}

\medskip\noindent
We let $w^*$ denote the unique minimizer of $F$. 

\subsection{Global Linear Convergence}
We now show that for the Newton method \eqref{iterate} to enjoy an R-linear rate of convergence  the gradient sample size must be increased at a geometric rate, i.e., $|X_{k}|=\eta^k$  for some $\eta >1$. On the other hand, the subsampled Hessian need not be accurate, and thus  it suffices to keep samples $S_{k}$ of constant size, $|S_k|=\beta \geq 1$. The following result, in which the steplength $\alpha_k$ in \eqref{iterate} is constant, is a simple extension of well known results  (see, e.g. \cite{byrd2012sample, friedlander2012hybrid, 2014pasglyetal}), but we include the proof for the sake of completeness. We assume that the set $X_k$ is drawn uniformly at random so that at every iteration $ \E[ \nabla F_{X_k}(w_k)] = \nabla F(w_k)$. We also assume that the sets $X_k$ and $S_k$ are chosen independently of each other.

\begin{thm} \label{thmlin}
	Suppose that Assumptions A1-A2 hold. Let $\{w_k\}$ be the iterates generated by  iteration \eqref{iterate} with any $w_0$, where $|X_{k}|=\eta^k$ for some $\eta >1$, and $|S_k|=\beta \geq 1$ is constant. Then, if the steplength satisfies $\alpha_k = \alpha = \frac{\mu_{\beta}}{L} $, we have that   
	\begin{equation} \label{linear}
	\E[F(w_k) - F(w^*)] \leq C\hat{\rho}^k ,
	\end{equation}
	where 
	\begin{equation} \label{parsl}
	C= \max\left\{F(w_0) - F(w^*), \frac{v^2L_{\beta}}{\mu\mu_{\beta}}\right\}\quad \mbox{ and } \quad \hat{\rho} = \max\left\{1 - \frac{\mu\mu_{\beta}}{2LL_{\beta}},\frac{1}{\eta} \right\}.
	\end{equation}
\end{thm}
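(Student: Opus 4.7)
The plan is to establish a one-step recurrence of the form
\begin{equation*}
\E[F(w_{k+1}) - F(w^*)] \leq \rho\,\E[F(w_k) - F(w^*)] + \frac{v^2}{2L|X_k|},
\end{equation*}
for a contraction constant $\rho$ strictly smaller than $\hat\rho$, and then close by induction on $k$ using $|X_k| = \eta^k$. For the one-step bound, I would start with the descent lemma for $F$ (which follows from \eqref{truel}) applied to $w_{k+1} - w_k = -\alpha H_k^{-1}\hat g_k$, where I abbreviate $H_k = \nabla^2 F_{S_k}(w_k)$ and $\hat g_k = \nabla F_{X_k}(w_k)$. The key algebraic move is to use $H_k^{-1}\preceq \mu_\beta^{-1}I$ to bound the quadratic term as
\begin{equation*}
\|H_k^{-1}\hat g_k\|^2 = \hat g_k^T H_k^{-2}\hat g_k \leq \tfrac{1}{\mu_\beta}\,\hat g_k^T H_k^{-1}\hat g_k,
\end{equation*}
so that with the prescribed choice $\alpha = \mu_\beta/L$ the coefficient $L\alpha^2/(2\mu_\beta)$ collapses to exactly $\alpha/2$.

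Next I decompose $\hat g_k = \nabla F(w_k) + e_k$ with $\E[e_k\mid w_k]=0$ and $\E[\|e_k\|^2\mid w_k]\leq v^2/|X_k|$ (the latter from A2). Expanding the two inner products that now appear in the descent bound, the cross terms cancel thanks to the specific value of $\alpha$, leaving
\begin{equation*}
F(w_{k+1}) - F(w_k) \leq -\tfrac{\alpha}{2}\,\nabla F(w_k)^T H_k^{-1}\nabla F(w_k) + \tfrac{\alpha}{2}\,e_k^T H_k^{-1}e_k.
\end{equation*}
Taking conditional expectation and using independence of $X_k$ and $S_k$, the first term is at least $L_\beta^{-1}\|\nabla F(w_k)\|^2$ (from $H_k^{-1}\succeq L_\beta^{-1}I$), while the second is at most $\tr(H_k^{-1}\covar(\hat g_k))\leq \mu_\beta^{-1}v^2/|X_k|$ (from $\|H_k^{-1}\|\leq \mu_\beta^{-1}$, A2, and $\tr(AB)\leq \|A\|\tr(B)$ for PSD $B$). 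Combining and invoking strong convexity $\|\nabla F(w_k)\|^2\geq 2\mu(F(w_k)-F(w^*))$ yields the per-step contraction factor $\rho = 1 - \mu\mu_\beta/(LL_\beta)$, which is strictly smaller than $\hat\rho = 1 - \mu\mu_\beta/(2LL_\beta)$; this gap is precisely what will absorb the residual noise in the induction.

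For the induction the base case $k=0$ is immediate from $C\geq F(w_0)-F(w^*)$. For the inductive step, assuming $\E[F(w_k)-F(w^*)]\leq C\hat\rho^k$ and using $\hat\rho\geq 1/\eta$ so that $\eta^{-k}\leq \hat\rho^k$, the recurrence gives
\begin{equation*}
\E[F(w_{k+1}) - F(w^*)] \leq \hat\rho^k\Big(\rho\, C + \tfrac{v^2}{2L}\Big),
\end{equation*}
and the hypothesis $C\geq v^2 L_\beta/(\mu\mu_\beta)$ implies $v^2/(2LC)\leq \mu\mu_\beta/(2LL_\beta) = \hat\rho-\rho$, delivering $\E[F(w_{k+1})-F(w^*)]\leq C\hat\rho^{k+1}$. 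The main obstacle I anticipate is the bookkeeping with two independent sources of randomness: one must split the conditional expectation as $\E_{S_k}\E_{X_k}$ in the right order, exploit unbiasedness of $\hat g_k$ to annihilate the cross terms, and carefully pass from $\E[\|e_k\|^2]\leq v^2/|X_k|$ to the matrix-weighted bound on $\E[e_k^T H_k^{-1}e_k]$ via the trace inequality above.
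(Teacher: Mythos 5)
Your proposal is correct and follows essentially the same route as the paper's proof: the descent lemma with the prescribed $\alpha=\mu_\beta/L$, the eigenvalue bounds on $\nabla^2F_{S_k}$, the variance bound $\E_k\|\nabla F_{X_k}(w_k)-\nabla F(w_k)\|^2\le v^2/|X_k|$ from A2, strong convexity, and the identical induction yielding the recurrence with contraction factor $1-\mu\mu_\beta/(LL_\beta)$ and noise term $v^2/(2L|X_k|)$. The only (harmless) difference is bookkeeping in the one-step bound --- you complete the square in the $\nabla^2F_{S_k}^{-1}$-weighted inner product so the cross terms cancel pointwise, whereas the paper decomposes $\E_k\|\nabla^2F_{S_k}^{-1}\nabla F_{X_k}\|^2$ into squared mean plus variance --- and both give exactly the same constants.
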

\begin{proof}
	Let $\E_k$ denote the conditional expectation at iteration $k$ for all possible sets  $X_k$. Then for any given $S_k$,
	\begin{align*}
	\E_k\left[F(w_{k+1})\right] &\leq   F(w_k)  - \alpha \nabla F(w_k)^T\nabla^2 F_{S_k}^{-1} (w_k)\E_k[\nabla F_{X_k}(w_k)] + \frac{L\alpha^2}{2}\E_k[\|\nabla^2 F_{S_k}^{-1} (w_k)\nabla F_{X_k}(w_k)\|^2]  \nonumber \\
	&=   F(w_k)  - \alpha \nabla F(w_k)^T\nabla^2 F_{S_k}^{-1} (w_k)\nabla F(w_k) + \frac{L\alpha^2}{2} \| \E_k[ \nabla^2 F_{S_k}^{-1} (w_k)\nabla F_{X_k}(w_k)] \|^2   \\  
	& \quad + \frac{L\alpha^2}{2}  \E_k[ \|\nabla^2 F_{S_k}^{-1} (w_k)\nabla F_{X_k}(w_k) - \E_k[ \nabla^2 F_{S_k}^{-1} (w_k)\nabla F_{X_k}(w_k)] \|^2 ]  \nonumber \\
	&=   F(w_k)  - \alpha \nabla F(w_k)^T \left(\nabla^2 F_{S_k}^{-1} (w_k) -\frac{L\alpha}{2}  \nabla^2 F_{S_k}^{-2} (w_k)\right)\nabla F(w_k)   \\  
	& \quad + \frac{L\alpha^2}{2}  \E_k[ \| \nabla^2 F_{S_k}^{-1} (w_k)\nabla F_{X_k}(w_k) -  \nabla^2 F_{S_k}^{-1} (w_k)\nabla F(w_k)  \|^2  ] \nonumber \\
	&\leq   F(w_k)  - \alpha \nabla F(w_k)^T \nabla^2 F_{S_k}^{-1/2} (w_k) \left(I -\frac{L\alpha}{2}  \nabla^2 F_{S_k}^{-1} (w_k)\right) \nabla^2 F_{S_k}^{-1/2}(w_k)\nabla F(w_k)   \\  
	& \quad + \frac{L\alpha^2}{2\mu_{\beta}^2}  \E_k[ \| \nabla F_{X_k}(w_k) -  \nabla F(w_k)  \|^2  ] .\nonumber \\
	\end{align*}
	Now,  $\{ \nabla^2 F_{S_k}^{-1}\}$ is a sequence of random variables but we can bound its eigenvalues from above and below. Therefore, we can use these eigenvalue bounds as follows:
	\begin{align*}
	\E_k\left[F(w_{k+1})\right]
	&\leq   F(w_k)  - \alpha  (1 -\frac{L\alpha}{2\mu_{\beta}} ) (1/L_{\beta}) \|\nabla F(w_k)  \|^2 + \frac{L\alpha^2}{2\mu_{\beta}^2}  \E_k[ \| \nabla F_{X_k}(w_k) -  \nabla F(w_k)  \|^2  ] \nonumber \\
	&\leq   F(w_k)  - \frac{\mu_{\beta}}{2LL_{\beta} } \|\nabla F(w_k)  \|^2 + \frac{1}{2L}  \E_k[ \| \nabla F_{X_k}(w_k) -  \nabla F(w_k)  \|^2  ] \nonumber \\
	& \leq F(w_k) - \frac{\mu\mu_{\beta}}{LL_{\beta}}(F(w_k) - F(w^*)) + \frac{1}{2L} \E_k[\|\nabla F(w_k) - \nabla F_{X_k}(w_k)\|^2], \nonumber  
	\end{align*}
	where the last inequality follows from the fact that, for any $\mu-$strongly convex function, $\| \nabla F(w_k) \|^2 \geq 2\mu (F(w_k) - F(w^*))$.
	Therefore, we get
	\begin{align}
	\E_k\left[F(w_{k+1}) - F(w^*)\right] &\leq \left(1 - \frac{\mu\mu_{\beta}}{LL_{\beta}}\right)(F(w_k) - F(w^*)) + \frac{1}{2L} \E_k[\|\nabla F(w_k) - \nabla F_{X_k}(w_k)\|^2 ] . \label{garden}
	\end{align}
	%
	Now, by Assumption A2 we have that
	\begin{align}
	\E_k[\|\nabla F(w_k) - \nabla F_{X_k}(w_k)\|^2] &= \E_k\left[ \tr\left(\left(\nabla F(w_k) - \nabla F_{X_k}(w_k)\right)\left(\nabla F(w_k) - \nabla F_{X_k}(w_k)\right)^T\right)\right] \nonumber \\
	&= \tr\left(\covar\left(\nabla F_{X_k}(w_k)\right)\right) \nonumber \\
	&= \tr\left(\covar\left(\frac{1}{|X_k|}\sum_{i\in X_k} \nabla F_i(w_k)\right)\right) \nonumber \\
	& \leq \frac{1}{|X_k|} \tr(\covar(\nabla F_i(w_k))) \nonumber \\
	& \leq \frac{v^2}{|X_k|} .\label{abound}
	\end{align}
	Substituting this inequality in \eqref{garden}, we obtain
	\begin{equation} \label{linear-lemma}
	\E_k\left[F(w_{k+1}) - F(w^*)\right] \leq \left(1 - \frac{\mu\mu_{\beta}}{LL_{\beta}}\right)(F(w_k) - F(w^*)) + \frac{v^2}{2L|X_k|} .
	\end{equation}
	We use induction for the rest of the proof, and to this end we recall the definitions of $C$ and $\hat{\rho}$.
	Since $\E[F(w_0) - F(w^*)] \leq C$, inequality \eqref{linear} holds for $k$=0. Now, suppose that \eqref{linear} holds for some $k$. Then by \eqref{linear-lemma}, the condition $|X_k| = \eta^k$, the definition of $\hat{\rho}$, and taking expectations, we have
	\begin{align*}
	\E[F(w_{k+1}) - F(w^*)] &\leq \left(1 - \frac{\mu\mu_{\beta}}{LL_{\beta}}\right) C\hat{\rho}^k + \frac{v^2}{2L|X_k|} \\
	&= C\hat{\rho}^k\left(1 - \frac{\mu\mu_{\beta}}{LL_{\beta}}+ \frac{v^2}{2LC(\hat{\rho}\eta)^k}\right) \\
	&\leq C\hat{\rho}^k\left(1 -  \frac{\mu\mu_{\beta}}{LL_{\beta}} + \frac{v^2}{2LC}\right) \\
	&\leq C\hat{\rho}^k\left(1 -  \frac{\mu\mu_{\beta}}{LL_{\beta}} +  \frac{\mu\mu_{\beta}}{2LL_{\beta}}\right) \\
	&\leq C\hat{\rho}^k\left(1 -   \frac{\mu\mu_{\beta}}{2LL_{\beta}}\right) \\
	&\leq C\hat{\rho}^{k+1} .
	\end{align*}       
\end{proof}  

If one is willing to increase the Hessian sample size as the iterations progress, then one can achieve a faster rate of convergence, as discussed next.

\subsection{Local Superlinear Convergence.}
We now discuss how to design the subsampled Newton method (using a unit stepsize) so as to obtain superlinear convergence in a neighborhood of the solution $w^{*}$. This question is most interesting when the objective function is given by the expectation \eqref{prob} and the indices $i$ are chosen from an  infinite set according to a probability distribution $P$. We will show that the sample size used for gradient estimation should increase at a rate that is \emph{faster than geometric},  i.e. $|X_k| \geq \eta_k^k$, where $\{\eta_k\} >1$ is an increasing sequence,  whereas sample size used for Hessian estimation should increase at any rate such that  $|S_k| \geq |S_{k-1}|$ and $\lim\limits_{k \rightarrow \infty}|S_k|=\infty$. 

We begin with the following result that identifies  three quantities that drive the iteration. Here  $\E_k$ denotes the conditional expectation at iteration $k$ for all possible sets  $X_k$ and $S_k$.
\begin{lem} \label{Terms} Let $\{w_k\}$ be the iterates generated by algorithm \eqref{iterate} with $\alpha_k=1$, and suppose that assumptions A1-A3 hold. Then for each $k$, 
	\begin{equation}   \label{linquad}
	\E_k[\|w_{k+1} - w^*\|] \leq \frac{1}{\mu_{|S_k|}}\left[\frac{M}{2}\|w_k - w^*\|^2 + \E_k\left\|\left(\nabla^2 F_{S_k}(w_k) - \nabla^2 F(w_k)\right)(w_k - w^*)\right\| + \frac{v}{ \sqrt{|X_k|}}\right] .
	\end{equation}
\end{lem}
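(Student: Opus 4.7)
The plan is to begin with the standard Newton error decomposition. Setting $\alpha_k=1$ in \eqref{iterate}, subtract $w^*$ from both sides and use the nonsingularity of $\nabla^2 F_{S_k}(w_k)$ (guaranteed by A1) to write
$$
w_{k+1} - w^* = \nabla^2 F_{S_k}^{-1}(w_k)\Bigl[\nabla^2 F_{S_k}(w_k)(w_k - w^*) - \nabla F_{X_k}(w_k)\Bigr].
$$
Invoke $\nabla F(w^*)=0$ and insert $\pm\nabla F(w_k)$ and $\pm\nabla^2 F(w_k)(w_k - w^*)$ inside the bracket to split it into three natural pieces: a deterministic Taylor remainder $\nabla^2 F(w_k)(w_k-w^*)-\bigl(\nabla F(w_k)-\nabla F(w^*)\bigr)$, a Hessian-sampling term $[\nabla^2 F_{S_k}(w_k) - \nabla^2 F(w_k)](w_k - w^*)$, and a gradient-sampling term $\nabla F(w_k) - \nabla F_{X_k}(w_k)$. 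These are precisely the three terms that appear in \eqref{linquad}.

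Next I would take norms, apply the triangle inequality, and factor out $\|\nabla^2 F_{S_k}^{-1}(w_k)\|\leq 1/\mu_{|S_k|}$, which by A1 holds for \emph{every} realization of $S_k$ of the given size; consequently this factor is deterministic and may be pulled outside the expectation. The Taylor-remainder piece is bounded by $\tfrac{M}{2}\|w_k-w^*\|^2$ via the integral mean value identity $\nabla F(w_k)-\nabla F(w^*)=\int_0^1 \nabla^2 F(w^*+t(w_k-w^*))(w_k-w^*)\,dt$ together with the Lipschitz Hessian assumption A3; this quantity is deterministic given $w_k$, so it is unaffected by $\E_k$. The Hessian-sampling piece, after taking $\E_k$, is left as-is and matches the middle term of \eqref{linquad}.

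For the gradient-sampling piece I would apply Jensen's inequality,
$$
\E_k\bigl\|\nabla F(w_k)-\nabla F_{X_k}(w_k)\bigr\|\leq \sqrt{\E_k\bigl\|\nabla F(w_k)-\nabla F_{X_k}(w_k)\bigr\|^2},
$$
and then reuse the variance computation already carried out in the proof of Theorem~\ref{thmlin}, equation~\eqref{abound}, which together with A2 gives $\E_k\|\nabla F(w_k)-\nabla F_{X_k}(w_k)\|^2 \leq v^2/|X_k|$. This yields the $v/\sqrt{|X_k|}$ contribution.

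The only mildly delicate point is bookkeeping the order of operations: $X_k$ and $S_k$ are independent, so one may evaluate $\E_k$ iteratively; but since the bound $1/\mu_{|S_k|}$ depends only on the sample size and not on which indices are drawn, there is no difficulty in pulling it outside either expectation, giving the clean form \eqref{linquad}. Everything else is routine manipulation; the main conceptual step is recognizing the three-term decomposition, which cleanly separates the deterministic Newton error from the two independent stochastic errors.
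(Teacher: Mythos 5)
Your proposal is correct and follows essentially the same route as the paper's proof: the identical three-term decomposition (Taylor remainder, Hessian-sampling error, gradient-sampling error) after factoring out $\nabla^2 F_{S_k}^{-1}(w_k)$ with the bound $1/\mu_{|S_k|}$, the same integral mean-value argument with A3 for the quadratic term, and the same Jensen-plus-\eqref{abound} argument for the $v/\sqrt{|X_k|}$ term. No gaps.
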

\begin{proof}
	We have that the expected distance to the solution after the $k$-th step is given by 
	\begin{align}
	\E_k[\|w_{k+1} - w^*\|] &= \E_k [\|w_k - w^* - \nabla^2 F^{-1}_{S_k}(w_k)\nabla F_{X_k}(w_k)\|]  \label{bterm} \\
	&= \E_k \left[\left\|  \nabla^2 F_{S_k}^{-1}(w_k) \left( \nabla^2 F_{S_k}(w_k)(w_k - w^*) - \nabla F(w_k)- 
	\nabla F_{X_k}  (w_k) + \nabla F(w_k) \right) \right\|\right] \nonumber \\
	&\leq \frac{1}{\mu_{|S_k|}}  \E_k\left[ \left\| \left( \nabla^2 F_{S_k}(w_k)- \nabla^2 F(w_k) \right) (w_k - w^*)  
	+ \nabla^2 F(w_k)(w_k - w^*) - \nabla F(w_k) \right\|\right] \nonumber \\
	&  \quad + \frac{1}{\mu_{|S_k|}} \E_k \left[\left\|    \nabla F_{X_k}(w_k) -\nabla F(w_k) \right\|\right]  . \nonumber 
	\end{align}
	Therefore,
	\begin{align}
	\E_k[\|w_{k+1} - w^*\|] &\leq \underbrace{\frac{1}{\mu_{|S_k|}}\|\nabla^2 F(w_k) (w_k -w^*)- \nabla F(w_k)\|}_\text{Term 1} \nonumber \\ 
	 &+ \underbrace{\frac{1}{\mu_{|S_k|}}\E_k\left[\left\|\left(\nabla^2 F_{S_k}(w_k) - \nabla^2 F(w_k)\right)(w_k - w^*)\right\|\right]}_\text{Term 2  \nonumber }\\
	&+ \underbrace{\frac{1}{\mu_{|S_k|}}\E_k[\|\nabla F_{X_k}(w_k) - \nabla F(w_k)\|]}_\text{Term 3}.  \label{terms}
	\end{align}
	For Term~1, we have by Lipschitz continuity of the Hessian \eqref{liphes},
	
	\begin{align*}
	\frac{1}{\mu_{|S_k|}}\|\nabla^2 F(w_k)(w_k - w^*) - \nabla F(w_k)\| 
	& \leq \frac{1}{\mu_{|S_k|}}\|w_k - w^*\|\left\|\int_{t=0}^{1}[\nabla^2 F(w_k) - \nabla^2 F(w_k + t(w^* - w_k))]dt\right\|\\
	& \leq \frac{1}{\mu_{|S_k|}}\|w_k - w^*\|\int_{t=0}^{1}\|\nabla^2 F(w_k) - \nabla^2 F(w_k + t(w^* - w_k))dt\|\\
	&\leq \frac{1}{\mu_{|S_k|}}\|w_k - w^*\|^2\int_{t=0}^{1}Mtdt \\
	& = \frac{M}{2\mu_{|S_k|}}\|w_k - w^*\|^2 .
	\end{align*}
	Term~3 represents the error in the gradient approximation. By Jensen's inequality we have that,
	\begin{align*}
	\left(\E_k[\|\nabla F(w_k) - \nabla F_{X_k}(w_k)\|]\right)^2 &\leq \E_k[\|\nabla F(w_k) - \nabla F_{X_k}(w_k)\|^2].  \nonumber \\
	\end{align*}
	We have shown in \eqref{abound} that $\E_k[\|\nabla F_{X_k}(w_k) - \nabla F(w_k)\|^2] \leq v^2 / |X_k|$, which concludes the proof.
\end{proof}

Let us now consider Term~2 in \eqref{linquad}, which represents the error due to Hessian subsampling. In order to prove convergence, we need to bound this term as a function of the Hessian sample size $|S_k|$. The following lemma shows that this error is inversely related to the square root of the sample size. 
\begin{lem} \label{accuh}
	Suppose that  Assumptions A1 and A4 hold. Then 
	\begin{equation} 
	\E_k[\|\left(\nabla^2 F_{S_k}(w_k) - \nabla^2 F(w_k) \right)(w_k - w^*)\|] \leq \frac{\sigma}{\sqrt{|S_k|}}\|w_k - w^*\| ,
	\end{equation}
	where $\sigma$ is defined in Assumptions A4.
\end{lem}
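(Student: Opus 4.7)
The plan is to apply Jensen's inequality to reduce the problem to bounding a second moment, and then exploit the fact that $\nabla^2 F_{S_k}(w_k) - \nabla^2 F(w_k)$ is an average of $|S_k|$ independent mean-zero random matrices so that the variance scales like $1/|S_k|$. Fix the iteration $k$, set $v = w_k - w^*$ (which is deterministic with respect to $\E_k$), and define the zero-mean i.i.d.\ symmetric matrices $Y_i = \nabla^2 F_i(w_k) - \nabla^2 F(w_k)$ for $i \in S_k$, so that $\nabla^2 F_{S_k}(w_k) - \nabla^2 F(w_k) = \frac{1}{|S_k|}\sum_{i \in S_k} Y_i$.

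First I would use Jensen's inequality in the form $\E_k[\|Z\|] \leq \sqrt{\E_k[\|Z\|^2]}$ applied to $Z = (\nabla^2 F_{S_k}(w_k) - \nabla^2 F(w_k))v$. Then I would expand
\[
\E_k\left[\left\|\tfrac{1}{|S_k|}\sum_{i \in S_k} Y_i v\right\|^2\right] = \tfrac{1}{|S_k|^2}\sum_{i,j \in S_k} \E_k[v^T Y_i Y_j v].
\]
Independence and $\E_k[Y_i] = 0$ kill all cross terms, leaving only the diagonal $i=j$ contributions, which reduces the right-hand side to $\tfrac{1}{|S_k|}\, \E_k[v^T Y_i^2 v]$ for any single index $i$.

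Next, since $Y_i$ is symmetric, $Y_i^T Y_i = Y_i^2$, so $\E_k[v^T Y_i^2 v] = v^T \E_k[Y_i^2] v \leq \|\E_k[Y_i^2]\| \, \|v\|^2$. Assumption A4 gives $\|\E[(\nabla^2 F_i(w_k) - \nabla^2 F(w_k))^2]\| \leq \sigma^2$, hence
\[
\E_k\left[\|(\nabla^2 F_{S_k}(w_k) - \nabla^2 F(w_k))v\|^2\right] \leq \frac{\sigma^2}{|S_k|}\|v\|^2.
\]
Taking square roots and combining with the Jensen step yields the desired bound $\tfrac{\sigma}{\sqrt{|S_k|}}\|w_k - w^*\|$.

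The only subtle point, and what I would flag as the main obstacle, is making sure the cross-term cancellation is valid: one must know that the elements of $S_k$ are sampled independently (the paper states they are chosen with or without replacement, so one should treat the with-replacement case cleanly here, or argue that sampling without replacement only reduces the variance). Apart from that, every step is a routine application of symmetry, linearity of expectation, and the operator-norm bound from A4.
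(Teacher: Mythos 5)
Your proposal is correct and follows essentially the same route as the paper's own proof: Jensen's inequality to pass to the second moment, cancellation of cross terms by independence and zero mean (which the paper packages as $\tr(\covar(Z_S)) \leq \frac{1}{|S|}\tr(\covar(Z_i))$), and then the bound $v^T\E[(\nabla^2 F_i(w)-\nabla^2 F(w))^2]v \leq \sigma^2\|v\|^2$ from Assumption A4. Your remark about with- versus without-replacement sampling is a fair caveat that the paper itself passes over by simply asserting the $Z_i$ are independent.
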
 
\begin{proof}
	Let us define $Z_S = \nabla^2 F_S(w)(w - w^*)$ and $Z=\nabla^2 F(w)(w - w^*)$, so that
	\begin{align*}
	\|\left(\nabla^2 F_{S}(w) - \nabla^2 F(w) \right)(w - w^*)\| = \|Z_S - Z\|.
	\end{align*}
	(For convenience we  drop the iteration index $k$ in this proof.)
	We also write $Z_S = \frac{1}{|S|}\sum Z_i$, where $Z_i = \nabla^2 F_i(w)(w - w^*)$, and note that each $Z_i$ is independent. By Jensen's inequality we have,
	\begin{align*}
	(\E[\|Z_S - Z\|])^2 &\leq \E[\|Z_S - Z\|^2] \\
	&= \E\left[\tr\left((Z_S - Z)(Z_S - Z)^T\right)\right] \\
	&=\trace(\covar(Z_S))\\
	&=\trace\left(\covar\left(\frac{1}{|S|}\sum_{i\in S} Z_i\right)\right)\\
	& \leq \frac{1}{|S|}\trace(\covar(Z_i))  \\
	& = \frac{1}{|S|}\trace\left(\covar(\nabla^2 F_i(w)(w - w^*))\right) .
	\end{align*}
	Now, we have by \eqref{bvhes} and \eqref{truel}
	\begin{align*}
	\trace \left(\covar(\nabla^2F_i(w)(w - w^*))\right) &= (w - w^*)^T\E[(\nabla^2 F_i(w) - \nabla^2 F(w))^2](w - w^*) \\
	 & \leq \sigma^2\|w - w^*\|^2 .
	\end{align*}
\end{proof} 
We note that in the literature on subsampled Newton methods \cite{erdogdu2015convergence, roosta2016sub} it is common to use matrix concentration inequalities to measure the accuracy of Hessian approximations. In Lemma \ref{accuh}, we measure instead the error along the vector $w_k-w^*$, which gives a more precise estimate.

Combining Lemma~\ref{Terms} and Lemma \ref{accuh}, and recalling \eqref{bars}, we obtain the following linear-quadratic bound
\begin{equation} \label{threet}
\E_k[\|w_{k+1} - w^*\|] \leq \frac{M}{2\bar{\mu}}\|w_k - w^*\|^2 + \frac{\sigma\|w_k - w^*\|}{\bar{\mu}\sqrt{|S_k|}} + 
\frac{v}{\bar{\mu} \sqrt{|X_k|}}.
\end{equation}
It is clear that in order to achieve a superlinear convergence rate, it is not sufficient to increase the sample size $|X_k|$  at a geometric rate, because that would decrease the last term in \eqref{threet} at a linear rate; thus $|X_k|$  must be increased at a rate that is faster than geometric.  From the middle term we see that sample size $|S_k|$  should also increase,  and can do so at any rate, provided $|S_k| \rightarrow \infty$. To bound the first term, we introduce the following assumption on the second moments of the distance  of iterates to the optimum. \\

\noindent   {\bf Assumption II} 
\begin{enumerate}
	\item [B1] {\bf(Bounded Moments of Iterates)} There is a constant $\gamma >0 $ such that for any iterate $w_k$ generated by algorithm \eqref{iterate} we have
	\begin{equation} \label{b1}
	\E[||w_k-w^*||^2] \leq \gamma (\E[||w_k - w^*||])^2.
	\end{equation}	   
\end{enumerate}

This is arguably a restrictive assumption on the variance of the error norms, but
we note that it is imposed on non-negative numbers.	

\begin{thm}[{\bf Superlinear convergence}] \label{superlinear}
	Let $\{w_k\}$ be the iterates generated by algorithm \ref{iterate} with stepsize $\alpha_k=\alpha=1$. Suppose that Assumptions A1-A4 and B1 hold and that for all~$k$:
	\begin{enumerate}
		\item[(i)] $|X_k| \geq |X_0|\,\eta^k_k ,\ $ with $\ |X_0| \geq\left(\frac{6v\gamma M}{\bar{\mu}^2}\right)^2$, 
		$\eta_k > \eta_{k-1}$, \, $\eta_k \rightarrow \infty$, and \, $ \eta_1 > 1.$ 
		\item[(ii)]  $|S_k| > |S_{k-1}|, \ $  with $\lim\limits_{k\rightarrow \infty} |S_k| = \infty$, and \, $|S_0| \geq \left(\frac{4\sigma}{\bar \mu}  \right)^2$. 
	\end{enumerate}
	Then, if the starting point satisfies
	\begin{equation*}
	\|w_0 - w^*\| \leq \frac{\bar{\mu}}{3\gamma M},
	\end{equation*}
	we have that $\E[\|w_k  - w^*\|] \rightarrow 0$ at an R-superlinear rate, i.e., there is a positive sequence $\{\tau_k\}$ such that
	\[
	\E[ \| w_k -w^* \|] \leq \tau_k \quad \mbox{and} \quad \tau_{k+1}/\tau_k \rightarrow 0.
	\]
\end{thm}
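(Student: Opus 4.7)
The plan is to convert the linear-quadratic bound (3.6) into a scalar recursion for $\tau_k := \E[\|w_k - w^*\|]$, verify that the coefficients start small enough to force contraction, and then bootstrap the resulting geometric decay to R-superlinear decay through a carefully chosen dominating sequence. First I would take the full expectation of (3.6), apply Assumption B1 to replace $\E[\|w_k-w^*\|^2]$ by $\gamma\tau_k^2$, and invoke $\mu_{|S_k|}\ge\bar\mu$ to obtain
\[
\tau_{k+1}\ \le\ A\tau_k^2\ +\ B_k\tau_k\ +\ C_k,\qquad A=\tfrac{M\gamma}{2\bar\mu},\ \ B_k=\tfrac{\sigma}{\bar\mu\sqrt{|S_k|}},\ \ C_k=\tfrac{v}{\bar\mu\sqrt{|X_k|}}.
\]
The hypothesis $\|w_0-w^*\|\le\bar\mu/(3\gamma M)$ gives $A\tau_0\le 1/6$; the bound $|S_0|\ge(4\sigma/\bar\mu)^2$ gives $B_0\le 1/4$ (and the monotonicity of $|S_k|$ preserves this for all $k$); and $|X_0|\ge(6v\gamma M/\bar\mu^2)^2$ produces the clean envelope $C_k\le u_k:=\tfrac{\bar\mu}{6\gamma M}\eta_k^{-k/2}$.

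Second, I would show by induction on $k$ that $\tau_k\le\bar\mu/(3\gamma M)$. The base case is the starting-point hypothesis. For the inductive step, $A\tau_k\le 1/6$ and $B_k\le 1/4$ combined with $u_k\le\bar\mu/(6\gamma M)$ (valid since $\eta_k\ge\eta_1>1$) yield
\[
\tau_{k+1}\ \le\ \tfrac{5}{12}\,\tau_k\ +\ \tfrac{\bar\mu}{6\gamma M}\ \le\ \tfrac{5}{12}\cdot\tfrac{\bar\mu}{3\gamma M}\ +\ \tfrac{\bar\mu}{6\gamma M}\ =\ \tfrac{11\,\bar\mu}{36\,\gamma M}\ <\ \tfrac{\bar\mu}{3\gamma M},
\]
closing the induction. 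A fortiori $\tau_k\to 0$ at geometric rate no worse than $5/12$, so $A\tau_k\to 0$, and $B_k\to 0$ since $|S_k|\to\infty$.

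Third, to promote geometric decay to R-superlinear decay, I would introduce the dominating sequence $\tilde\tau_0=\|w_0-w^*\|$, $\tilde\tau_{k+1}=A\tilde\tau_k^2+B_k\tilde\tau_k+u_k$. Monotonicity of the right-hand side in $\tilde\tau_k$ together with $u_k\ge C_k$ yields $\tau_k\le\tilde\tau_k$ inductively, and the same Step 2 induction applied to $\tilde\tau_k$ gives $\tilde\tau_k\to 0$. It therefore suffices to show $\tilde\tau_{k+1}/\tilde\tau_k\to 0$. Dividing the defining identity by $\tilde\tau_k>0$ gives
\[
\tilde\tau_{k+1}/\tilde\tau_k\ =\ A\tilde\tau_k\ +\ B_k\ +\ u_k/\tilde\tau_k,
\]
whose first two summands already vanish. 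For the third, non-negativity of the recursion forces $\tilde\tau_k\ge u_{k-1}$ for $k\ge 1$, and
\[
u_k/u_{k-1}\ =\ \sqrt{\eta_{k-1}^{k-1}/\eta_k^k}\ \le\ 1/\sqrt{\eta_k}\ \longrightarrow\ 0,
\]
since $\eta_{k-1}\le\eta_k$ and $\eta_k\to\infty$. Taking $\tau_k\leftarrow\tilde\tau_k$ in the statement of the theorem completes the proof.

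The main obstacle is exactly this last ratio. The stochastic gradient noise leaves a residual of order $C_{k-1}$ at step $k$, so R-superlinear convergence can only hold if this noise floor itself contracts superlinearly; this is precisely why assumption (i) demands the strictly super-geometric growth $|X_k|\ge|X_0|\eta_k^k$ with $\eta_k\to\infty$, as opposed to the merely geometric growth sufficient in Theorem~\ref{thmlin}. Comparing $u_k$ against the preceding noise floor $u_{k-1}$ \emph{through the dominating sequence} (rather than comparing $C_k$ directly against $\tau_k$, where no useful lower bound is available) is the key manoeuvre that makes the required ratio bound accessible.
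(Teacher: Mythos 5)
Your proof is correct, and while it follows the same overall skeleton as the paper's argument (take expectations in the linear--quadratic bound \eqref{threet}, invoke Assumption B1, run an induction on a scalar majorant, then show the ratio of consecutive majorants vanishes), you resolve the one genuinely delicate point by a different device. The difficulty in both proofs is the additive noise term $C_k=v/(\bar\mu\sqrt{|X_k|})$: to show $\tau_{k+1}/\tau_k\to 0$ one must divide this term by $\tau_k$, which requires a \emph{lower} bound on the majorant. The paper manufactures this floor explicitly by defining $\tau_{k+1}=\max\bigl\{\tau_k\rho_k,\ \eta_{k+1}^{-(k+1)/4}\bigr\}$, so that $\tau_k\ge\eta_k^{-k/4}$ by fiat, at the cost of some exponent bookkeeping (the $k/4$ powers) and a slightly ad hoc recursion. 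You instead define the dominating sequence by \emph{equality}, $\tilde\tau_{k+1}=A\tilde\tau_k^2+B_k\tilde\tau_k+u_k$, so the floor $\tilde\tau_k\ge u_{k-1}$ comes for free from non-negativity, and the ratio is controlled by $u_k/u_{k-1}\le\eta_k^{-1/2}\to 0$, which directly exploits the super-geometric growth of $|X_k|$. This is arguably cleaner and makes more transparent \emph{why} condition (i) is needed. Two cosmetic points: your claim that $\tau_k\to 0$ ``at geometric rate no worse than $5/12$'' is slightly imprecise (the rate is governed by $\max\{5/12,\eta_1^{-1/2}\}$ plus an epsilon, since the forcing term $u_k$ decays like $\eta_1^{-k/2}$), but only $\tilde\tau_k\to 0$ is actually used, so nothing breaks; and for the division by $\tilde\tau_k$ you should note that $\tilde\tau_k\ge u_{k-1}>0$ for $k\ge 1$ even if $w_0=w^*$, which your own floor argument already supplies.
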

\begin{proof}
	We establish the result by showing that, for all $k$, 
	\begin{equation}  \label{rlinear}
	\E[\|w_k -w^*\|] \leq \frac{\bar{\mu}}{3\gamma M}\tau_k,
	\end{equation}
	where
	\begin{equation}   \label{lots}
	\tau_{k+1} = \max\left\{{\tau_k\rho_k, \eta_{k+1}^{-(k+1)/4}}\right\} , \quad \tau_0 = 1, \quad   \rho_k =  \frac{\tau_k}{6} + \frac{1}{4}\sqrt{\frac{|S_0|}{|S_k|}} +
	\frac{1}{2\eta^{k/4}_k} .
	\end{equation}
	We use induction to show \eqref{rlinear}.  Note that the base case, $k=0$, is trivially satisfied. Let us assume that the result is true for iteration $k$, so that
	\[
	\frac{3\gamma M}{\bar \mu} \E [\| w_k - w_* \|] \leq \tau_k.
	\]
	Let us now consider iteration $k+1$. Using  \eqref{threet}, the bounds for the sample sizes given in conditions (i)--(ii), \eqref{lots} and \eqref{b1}, we get 
	\begin{align*}
	\E[\E_k\|w_{k+1} - w^*\|] &\leq \E\left[ \frac{M}{2\bar{\mu}}\|w_k - w^*\|^2 + \frac{\sigma
			\|w_k - w^*\|}{\bar{\mu}\sqrt{|S_k|}} + \frac{v}{\bar{\mu} \sqrt{|X_k|}}\right] \\
	&\leq \frac{\gamma M}{2\bar{\mu}}(\E [\|w_k - w^*\|])^2 + \frac{\sigma 
			\E[ \|w_k - w^*\|]}{\bar{\mu}\sqrt{|S_k|}} + \frac{v}{\bar{\mu} \sqrt{|X_k|}}\\
	& \leq \frac{\bar{\mu}}{3\gamma M}\tau_k \left(\frac{\tau_k}{6} \right) + 
	\frac{\bar{\mu}}{3\gamma M}\tau_k\left(\frac{1}{4}\sqrt{\frac{|S_0|}{|S_k|}}\right) 
	+ \frac{\bar{\mu}}{3\gamma M}\left(\frac{1}{2\sqrt{\eta^k_k}} \right) \\
	&= \frac{\bar{\mu}}{3\gamma M}\tau_k\left[\frac{\tau_k}{6} + \frac{1}{4}\sqrt{\frac{|S_0|}{|S_k|}} 
	+ \frac{1}{2\tau_k\sqrt{\eta^k_k}} \right] \\
	&\leq \frac{\bar{\mu}}{3\gamma M}\tau_k\left[\frac{\tau_k}{6} + \frac{1}{4}\sqrt{\frac{|S_0|}{|S_k|}} 
	+ \frac{1}{2\eta^{k/4}_k} \right] \\                                        
	&= \frac{\bar{\mu}}{3\gamma M}\tau_k\rho_k \
	\leq \ \frac{\bar{\mu}}{3\gamma M}\tau_{k+1} ,
	\end{align*}
	which proves \eqref{rlinear}.  
	
	To prove R-superlinear convergence, we show that the sequence $\tau_k$ converges superlinearly to 0. First, we use 
	induction to show that $\tau_k < 1$. For the base case $k=1$ we have,
	\begin{equation*}
	\rho_0 = \frac{\tau_0}{6} + \frac{1}{4}+ \frac{1}{2} = \frac{1}{6} + \frac{1}{4} + \frac{1}{2} = \frac{11}{12} < 1 ,
	\end{equation*}
	\begin{equation*}
	\tau_1= \max\left\{\tau_0\rho_0, \eta_1^{(-1/4)}\right\} = \max\left\{\rho_0, \eta_1^{-1/4}\right\} < 1  . 
	\end{equation*}
	Now, let us assume that $\tau_{k} < 1$ for some $k>1$. By the fact that $\{ |S_k|\}$ and $\{ \eta_k \}$ are increasing sequences, we obtain
	\begin{equation}   \label{newarg}
	\rho_k = \frac{\tau_k}{6} + \frac{1}{4}\sqrt{\frac{|S_0|}{|S_k|}} + \frac{1}{2\eta^{k/4}_k}
	\leq \frac{1}{6} + \frac{1}{4} + \frac{1}{2} = \frac{11}{12} < 1    ,
	\end{equation}
	\begin{equation*}
	\tau_{k+1} = \max\left\{\tau_k\rho_k, \eta_{k+1}^{-(k+1)/4} \right\} \leq \max\left\{\rho_k, \eta_1^{-(k+1)/4}\right\} < 1,
	\end{equation*}
	which proves that $\tau_k <1$ for all $k >1$. 
	
	Moreover,  since $\rho_k \leq 11/12$, we see from the first definition in \eqref{lots}  (and the fact that $\eta_k \rightarrow \infty$) that the sequence $\{\tau_k\}$ converges to zero. This  implies by the second definition in \eqref{lots} that $\{ \rho_k \} \rightarrow 0$.

	\comment{
		Using this fact, we have that
		\begin{align*}
		\lim\limits_{k\rightarrow \infty} \rho_k &= \lim\limits_{k\rightarrow \infty} \frac{\tau_k}{6} + \frac{1}{4}\sqrt{\frac{|S_0|}{|S_k|}} + \frac{1}{2\eta^{k/4}_k} \\
		&= \lim\limits_{k\rightarrow \infty} \frac{\max\left\{\tau_{k-1}\rho_{k-1}, \eta_k^{-k/4}\right\}}{6} +  \frac{1}{4}\sqrt{\frac{|S_0|}{|S_k|}} + \frac{1}{2\eta_k^{k/4}} \\
		& \leq \lim\limits_{k\rightarrow \infty} \frac{\max\left\{\tau_{k-1}\rho_{k-1}, 4^{-k/4}\right\})}{6} \\
		& \leq \lim\limits_{k\rightarrow \infty} \frac{\max\left\{\rho_{k-1}, 4^{-k/4}\right\}}{6} \\
		& \leq \lim\limits_{k\rightarrow \infty} \frac{\rho_{k-1}}{6}   .
		\end{align*}
		Since \textcolor{blue}{$0 \leq \rho_k <1$}, this implies that $\lim\limits_{k\rightarrow \infty} \rho_k =0$.
	}
	Using these observations, we conclude that
	\begin{align*}
	\lim\limits_{k\rightarrow \infty} \frac{\tau_{k+1}}{\tau_k} &= \lim\limits_{k\rightarrow \infty} \frac{\max\left\{\tau_k\rho_k, \eta_{k+1}^{-(k+1)/4}\right\}}{\tau_k} \\
	&= \lim\limits_{k\rightarrow \infty} \max\left\{\rho_k, \frac{\eta_{k+1}^{-(k+1)/4}}{\tau_k}\right\} \\
	& \leq \lim\limits_{k\rightarrow \infty} \max\left\{\rho_k, \left(\frac{\eta_k}{\eta_{k+1}}\right)^{k/4}\frac{1}{\eta_{k+1}^{1/4}}\right\} \\ 
	& \leq \lim\limits_{k\rightarrow \infty} \max\left\{\rho_k, \frac{1}{\eta_{k+1}^{1/4}}\right\} = 0      .
	\end{align*}      
\end{proof}  

The constants 6 and 4 in assumptions (i)-(ii) of this theorem were chosen for the sake of simplicity, to avoid introducing general parameters, and other values could be chosen. 

Let us compare this result with those established in the literature. We established superlinear convergence in expectation. In contrast the results in \cite{roosta2016sub} show a rate of decrease in the error at a given iteration with certain probability $1-\delta$. Concatenating such statements does not give convergence guarantees of the overall sequence with high probability. We should note, however, that to establish our result we assume that all iterates satisfy Assumption~II.
Pasupathy et al. \cite{2014pasglyetal} use a different approach to show that the entire sequence of iterates converge in expectation. They assume that the ``deterministic analog" has a \emph{global superlinear rate} of convergence, a rather strong assumption.  In conclusion, all the results just mentioned are useful and shed light into the properties of subsampled Newton methods, but none of them seem to be definitive.

\section{Inexact Newton-CG Method}    \label{inexactn}

We now consider inexact subsampled Newton methods in which the Newton equations \eqref{lins} are solved approximately. A natural question that arises in this context is the relationship between the accuracy in the solution of \eqref{lins}, the size of the sample $S_k$,  and the rate of convergence of the method. Additional insights are obtained by analyzing the properties of specific iterative solvers for the system \eqref{lins}, and we focus here on the conjugate gradient (CG) method. We provide a complexity analysis for an inexact subsampled Newton-CG method, and in section~\ref{complex}, compare it with competing approaches. 

In this section, we assume that the Hessian is subsampled but  the gradient is not. Since computing the full (exact) gradient is more realistic when the objective function is given by the finite sum \eqref{probrr}, we assume throughout this section that the objective is $R$.
The iteration therefore has the form
\begin{equation} \label{ncg}
w_{k+1} = w_k + p_k^r
\end{equation}
where $p_k^r$ is the approximate solution  obtained after applying $r$ steps of the CG method to the $d \times d$ linear system
\begin{equation} \label{linss}
\nabla^2R_{S_k}(w_k)p = - \nabla R(w_k), \quad\mbox{with} \quad \nabla^2R_{S_k}(w_k)= \frac{1}{|S_k|}\sum_{i \in S_k} \nabla^2 F_i(w_k).
\end{equation}
We assume that the number $r$ of CG steps performed at every iteration is constant, as this facilitates our complexity analysis which is phrased in terms of $|S_k|$ and $r$. (Later on, we consider an alternative setting where the accuracy in the linear system solve is controlled by a residual test.) Methods in which the Hessian is subsampled but the gradient is not, are sometimes called \emph{semi-stochastic}, and several variants  have  been studied in  \cite{agarwal2016second,byrd2011use,pilanci2015newton,roosta2016sub}. 

A sharp analysis of the Newton-CG method \eqref{ncg}-\eqref{linss} is difficult to perform because the convergence rate of the CG method  varies at every iteration depending on the spectrum $\{ \lambda_1 \leq \lambda_2 \leq \ldots, \leq \lambda_d\}$ of the positive definite matrix $ \nabla^2R_{S_k}(w_k)$. For example, after computing $r$ steps of the CG method applied to the linear system in \eqref{linss}, the iterate $p^r$ satisfies
\begin{equation}  \label{fine}
||p^{r} - p^*||_A^2 \leq \left(\frac{\lambda_{d-r} - \lambda_1}{\lambda_{d-r} + \lambda_1}\right)^2 \| p^0 - p^*||_A^2, \quad\mbox{with} \quad
A = \nabla^2R_{S_k}(w_k).
\end{equation} 
Here $p^*$ denotes the exact solution and $\| x \|_A^2 \defeq x^T A x$.
In addition, one can show that CG will terminate in $t$ iterations, where $t$ denotes the number of distinct eigenvalues of $\nabla^2R_{S_k}(w_k)$, and also shows that the method does not approach the solution at a steady rate. 

Since an analysis based on the precise bound \eqref{fine} is complex, we make use of  the \emph{worst case behavior of CG} \cite{GoluvanL89} which is given by 
\begin{equation} \label{CG-Convergence}
\|p^{r} - p^*\|_A \leq 2\left(\frac{\sqrt{\kappa(A)} - 1}{\sqrt{\kappa(A)} + 1}\right)^r\|p^0 - p^*\|_A ,
\end{equation}
where $\kappa(A)$ denotes the condition number of $A$.
Using this bound, we can establish the following linear-quadratic bound. Here the sample $S_k$ is allowed to vary at each iteration but its size, $|S_k|$ is assumed constant.

\begin{lem}\label{quadl} 
	%
	Let $\{w_k\}$ be the iterates generated by the inexact Newton method  \eqref{ncg}-\eqref{linss}, where $|S_k|=\beta$ and the direction $p_k^r$  is the result of performing $r <d $ CG iterations on the system  \eqref{linss}. Suppose Assumptions~A1, A3 and A4 hold. 
	Then, 
	\begin{equation}  \label{capsule}
	\E_k[\|w_{k+1} - w^*\|] \leq C_1 \|w_k - w^*\|^2 +\left( \frac{C_2}{\sqrt{\beta}} +C_3 \theta^r \right) \|w_k - w^*\| ,
	\end{equation}
	where    
	\begin{equation}   \label{c3}
	C_1 = \frac{M}{2\mu_{\beta}}, \quad C_2 =  \frac{\sigma}{\mu_{\beta}}, \quad   
	C_3 = \frac{2 L}{\mu_{\beta}}\sqrt{\delta(\beta)}, \quad
	 \theta= \left(\frac{\sqrt{\delta(\beta)}-1}{\sqrt{\delta(\beta)}+1}\right), \quad
	\delta(\beta)=  \frac{L_\beta}{\mu_\beta}.
	\end{equation}
\end{lem}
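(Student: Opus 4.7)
The plan is to split the inexact Newton step into the exact subsampled Newton step plus the CG residual, estimate each piece separately, and combine. Let $p_k^* = -\nabla^2 R_{S_k}(w_k)^{-1}\nabla R(w_k)$ be the exact subsampled Newton direction (the target of CG), so that
\[
w_{k+1} - w^* = \bigl(w_k + p_k^* - w^*\bigr) + \bigl(p_k^r - p_k^*\bigr),
\]
and by the triangle inequality it suffices to bound each summand in expectation.

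For the first summand, I would repeat the argument of Lemma~\ref{Terms}, only now the gradient is exact, so ``Term~3'' vanishes identically. What remains is ``Term~1'', which the Lipschitz-Hessian assumption A3 bounds by $\tfrac{M}{2\mu_\beta}\|w_k-w^*\|^2$, and ``Term~2'', which Lemma~\ref{accuh} (applied to $R$) bounds in expectation by $\tfrac{\sigma}{\mu_\beta\sqrt{\beta}}\|w_k-w^*\|$. This immediately produces the $C_1\|w_k-w^*\|^2$ and $(C_2/\sqrt{\beta})\|w_k-w^*\|$ contributions with the constants in \eqref{c3}.

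For the second summand, I would invoke the worst-case CG bound \eqref{CG-Convergence} with initial iterate $p^0=0$, so that $\|p_k^0-p_k^*\|_A = \|p_k^*\|_A$. Since the eigenvalues of $A=\nabla^2R_{S_k}(w_k)$ lie in $[\mu_\beta,L_\beta]$, the condition number is $\delta(\beta)=L_\beta/\mu_\beta$, giving the factor $\theta^r$. Converting between norms via $\|\cdot\|\le \mu_\beta^{-1/2}\|\cdot\|_A$ and $\|\cdot\|_A\le L_\beta^{1/2}\|\cdot\|$ yields
\[
\|p_k^r - p_k^*\| \;\le\; 2\theta^r\sqrt{\delta(\beta)}\,\|p_k^*\|.
\]
Finally, $\|p_k^*\| \le \mu_\beta^{-1}\|\nabla R(w_k)\| \le (L/\mu_\beta)\|w_k-w^*\|$ (using A1 and $\nabla R(w^*)=0$), which produces the constant $C_3=2L\sqrt{\delta(\beta)}/\mu_\beta$ exactly as in \eqref{c3}. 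Since this bound is deterministic given $S_k$, it carries through the conditional expectation unchanged.

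The main subtlety is keeping the norm conversions clean: one must distinguish the global Lipschitz constant $L$ from the sample-dependent $L_\beta$, use $L_\beta$ only through the condition number appearing in the CG rate, and use $L$ only when bounding $\|\nabla R(w_k)\|$. Once the two pieces above are added, the triangle inequality and linearity of (conditional) expectation deliver \eqref{capsule} with the stated constants.
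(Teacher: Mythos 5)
Your proposal is correct and follows essentially the same route as the paper: the same decomposition into the exact subsampled Newton step plus the CG residual, the same reuse of Lemma~\ref{Terms} (with the gradient-sampling term absent) together with Lemma~\ref{accuh} for the first piece, and the same worst-case CG bound with $p^0=0$, norm conversion via $\sqrt{\kappa(A)}\le\sqrt{\delta(\beta)}$, and the estimate $\|\nabla^2R_{S_k}^{-1}(w_k)\nabla R(w_k)\|\le (L/\mu_\beta)\|w_k-w^*\|$ for the second. No gaps.
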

\begin{proof}  
	We have that
	\begin{align}   
	\E_k[\|w_{k+1} - w^*\|] &= \E_k[\|w_k -w^* + p_k^r\|]   \nonumber \\
	& \leq \underbrace{\E_k[\|w_k - w^* - \nabla^2R_{S_k}^{-1}(w_k)\nabla R(w_k)\|]}_\text{Term 4} +
	\underbrace{\E_k[\|p_k^r + \nabla^2R_{S_k}^{-1}(w_k)\nabla R(w_k)\|]}_\text{Term 5} . \label{twott}
	\end{align} 
	Term 4 was analyzed in the previous section where the objective function is $F$, i.e., where the iteration is defined by \eqref{iterate} so that \eqref{bterm} holds. In our setting, we have that Term~3 in \eqref{terms} is zero (since the gradient is not sampled) and hence, from \eqref{linquad},
	\begin{equation}
	\E_k[\|w_k - w^* - \nabla^2R_{S_k}^{-1}(w_k)\nabla R(w_k)\|] \leq  \frac{M}{2\mu_{\beta}}\|w_k - w^*\|^2 + \frac{1}{\mu_{\beta}}\E_k\left[\left\|\left(\nabla^2 R_{S_k}(w_k) - \nabla^2 R(w_k)\right)(w_k - w^*)\right\|\right] .
	\end{equation}
	Recalling Lemma~\ref{accuh} (with $R$ replacing $F$) and the definitions \eqref{c3}, we have 
	\begin{equation}
	\E_k[\|w_k - w^* - \nabla^2R_{S_k}^{-1}(w_k)\nabla R(w_k)\|] \leq C_1 \|w_k - w^*\|^2 + \frac{C_2}{\sqrt{\beta}} \|w_k - w^*\| .
	\end{equation}
	Now, we analyze Term 5, which is the residual in the CG solution after $r$ iterations. Assuming for simplicity that the initial CG iterate is $p^0_k=0$, we obtain from \eqref{CG-Convergence} 
	\begin{align*}
	\|p_k^r + \nabla^2R_{S_k}^{-1}(w_k)\nabla R(w_k)\|_{A} &\leq 2\left(\frac{\sqrt{\kappa(A)}-1}
	{\sqrt{\kappa(A)}+1}\right)^{r} \|\nabla^2R_{S_k}^{-1}(w_k)\nabla R(w_k)\|_{A} ,
	\end{align*}
	where $A= \nabla^2R_{S_k}(w_k)$.
	To express this in terms of unweighted norms,  note that if $\| a \|^2_{A}  \leq \| b \|^2_{A} $, then
	\[  
	\lambda_1 \|a\|^2 \leq   a^T {A} a \leq   b^T {A} b \leq \lambda_d \|b\|^2 \ \Longrightarrow \ \|a\| \leq \sqrt{k(A)} \|b\|.
	\]
	Therefore, from Assumption A1
	\begin{align}
	\E_k[ \| p_k^r + \nabla^2R_{S_k}^{-1}(w_k)\nabla R(w_k) \|] &\leq  2\sqrt{\kappa(A)}\left(\frac{\sqrt{\kappa(A)}-1}
	{\sqrt{\kappa(A)}+1}\right)^{r} \E_k [\| \nabla^2R_{S_k}^{-1}(w_k)\nabla R(w_k) \|] \nonumber \\
	&\leq 2\sqrt{\kappa(A)}\left(\frac{\sqrt{\kappa(A)}-1}{\sqrt{\kappa(A)}+1}\right)^{r} \| \nabla R(w_k)\| \, \E_k[\|\nabla^2R_{S_k}
	^{-1}(w_k)\|] \nonumber \\
	&\leq \frac{2 L}{\mu_{\beta}}\sqrt{\kappa(A)}\left(\frac{\sqrt{\kappa(A)}-1}{\sqrt{\kappa(A)}+1}\right)^{r} \|w_k - w^*\| 
	\nonumber \\
	& = C_3 \theta^r \|w_k - w^*\| ,   \label{eder}
	\end{align}  
	where the last step follows from the fact that, by the definition of $A$, we have 
	$
	     \mu_\beta \leq \lambda_1 \leq \cdots \leq \lambda_d \leq L_\beta,
	 $
	 and hence $\kappa(A) \leq L_\beta/\mu_\beta = \delta(\beta)$.
\end{proof}
We now use  Lemma~\ref{quadl} to determine the number of Hessian samples $|S|$ and the number of CG iterations $r$ that guarantee a \emph{given}  rate of convergence. Specifically, we require a linear rate with constant 1/2, in a neighborhood of the solution.  This will allow us to compare our results with those in  \cite{agarwal2016second}.  We recall that  $C_1$ is defined in \eqref{c3} and $\gamma$ in \eqref{b1}.

\begin{thm} \label{thm-cg}
Suppose that Assumptions A1, A3, A4 and B1 hold.
	Let $\{w_k\}$ be the iterates generated by  inexact Newton-CG method \eqref{ncg}-\eqref{linss}, with 
		\begin{equation}   \label{sampleb}
		|S_k|=\beta \geq \tfrac{64 \sigma^2 }{\bar \mu^2} ,
		\end{equation}
	and suppose that the number of CG steps performed at every iteration satisfies
	\begin{equation*}
	r \geq \log \left(\frac{16L}{\mu_{\beta}} \sqrt{\delta(\beta)}  \right) \frac{1}
	{  \log \left( \frac{\sqrt{\delta(\beta)} + 1}{\sqrt{\delta(\beta)} - 1} \right)  }.
	\end{equation*}
	 Then, if $||w_0 - w^*|| \leq \min \{\frac{1}{4C_1}, \frac{1}{4\gamma C_1}\}$, we have
	\begin{equation}   \label{halfing}
	\E[||w_{k+1} - w^*||] \leq \tfrac{1}{2}\E[||w_k - w^*||].
	\end{equation}  
\end{thm}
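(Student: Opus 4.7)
The plan is to invoke the linear-quadratic bound \eqref{capsule} from Lemma \ref{quadl}, take full expectation using Assumption B1, and then argue by induction that the coefficient multiplying $\E[\|w_k-w^*\|]$ is at most $1/2$ under the stated hypotheses on $\beta$, $r$, and $\|w_0-w^*\|$. Concretely, taking expectations of both sides of \eqref{capsule} and applying \eqref{b1} to the quadratic term gives
$$\E[\|w_{k+1}-w^*\|]\le\left(C_1\gamma\,\E[\|w_k-w^*\|]+\frac{C_2}{\sqrt{\beta}}+C_3\theta^r\right)\E[\|w_k-w^*\|].$$
The strategy is to force the three summands in the parentheses to be at most $1/4$, $1/8$, and $1/8$ respectively, so that the total contraction factor is at most $\tfrac{1}{2}$.

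I would first dispose of the two $k$-independent terms. Substituting $C_2=\sigma/\mu_\beta$ and using $\mu_\beta\ge\bar\mu$ together with the stated bound $\beta\ge 64\sigma^2/\bar\mu^2$ gives $C_2/\sqrt{\beta}\le\sigma/(\bar\mu\cdot 8\sigma/\bar\mu)=1/8$. For the CG-residual contribution, I would substitute $C_3=(2L/\mu_\beta)\sqrt{\delta(\beta)}$ into the requirement $C_3\theta^r\le 1/8$ and rearrange using $\log(1/\theta)=\log\bigl((\sqrt{\delta(\beta)}+1)/(\sqrt{\delta(\beta)}-1)\bigr)$; this reproduces exactly the lower bound on $r$ stated in the theorem, so by hypothesis the CG-residual term is at most $1/8$.

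The quadratic contribution $C_1\gamma\,\E[\|w_k-w^*\|]$ is handled by induction. The inductive hypothesis is that $\E[\|w_k-w^*\|]\le\|w_0-w^*\|\le\min\{1/(4C_1),1/(4\gamma C_1)\}$; under it, $C_1\gamma\,\E[\|w_k-w^*\|]\le C_1\gamma\cdot 1/(4\gamma C_1)=1/4$. Adding the three bounds yields $\E[\|w_{k+1}-w^*\|]\le\tfrac{1}{2}\,\E[\|w_k-w^*\|]$, which is the claimed contraction and, since $1/2<1$, keeps the neighborhood condition $\E[\|w_{k+1}-w^*\|]\le\|w_0-w^*\|$ intact, thereby closing the induction.

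The only genuinely delicate step is the use of Assumption B1 to replace $\E[\|w_k-w^*\|^2]$ by $\gamma(\E[\|w_k-w^*\|])^2$; without this bridge between the second moment and the square of the first moment, the quadratic term in \eqref{capsule} cannot be absorbed into a first-moment contraction and the induction simply fails to close. Everything else — verifying the two explicit inequalities for $\beta$ and $r$, choosing the split $1/4+1/8+1/8$, and checking that the neighborhood radius $\min\{1/(4C_1),1/(4\gamma C_1)\}$ is preserved along the iterates — is routine algebra once \eqref{capsule} is in hand.
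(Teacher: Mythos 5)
Your proposal is correct and follows essentially the same route as the paper's proof: the same $1/4+1/8+1/8$ split of the contraction factor, the same verification that the sample-size and CG-iteration bounds force $C_2/\sqrt{\beta}\le 1/8$ and $C_3\theta^r\le 1/8$, and the same induction using Assumption B1 to convert $\E[\|w_k-w^*\|^2]$ into $\gamma(\E[\|w_k-w^*\|])^2$ so that the quadratic term is absorbed via $\|w_0-w^*\|\le 1/(4\gamma C_1)$. The only cosmetic difference is that the paper treats the base case $k=0$ separately (where $w_0$ is deterministic and B1 is not needed), while you fold it into the general inductive step; this changes nothing of substance.
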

\begin{proof}
		By the definition of $C_2$ given in \eqref{c3} and \eqref{sampleb}, we have that $C_2/\sqrt{\beta} \leq 1/8$. Now,
	\begin{align*}
	C_3 \theta^r & = \frac{2 L}{\mu_{\beta}}\sqrt{\delta(\beta)}\left(\frac{\sqrt{\delta(\beta)} - 1}{\sqrt{\delta(\beta)} + 1}\right)^{r} \\ \nonumber
	& \leq  \frac{2 L}{\mu_{\beta}}\sqrt{\delta(\beta)}\left(\frac{\sqrt{\delta(\beta)} - 1}{\sqrt{\delta(\beta)} + 1}\right)^{\left(\frac{\log \left(\frac{16L}{\mu_{\beta}} \sqrt{\delta(\beta)}  \right)}
		{  \log \left( \frac{\sqrt{\delta(\beta)} + 1}{\sqrt{\delta(\beta)} - 1} \right)  }\right)} \\ \nonumber
	&=\frac{2 L}{\mu_{\beta}}\sqrt{\delta(\beta)}\frac{1}{\frac{16L}{\mu_{\beta}} \sqrt{\delta(\beta)}} \quad  \leq \frac{1}{8}.
	\end{align*}
	We use induction to prove \eqref{halfing}. For the base case we have from Lemma~\ref{quadl},
	\begin{align*}
	\E [\| w_1 - w^*\|] &\leq C_1 \| w_0 - w^*\| \| w_0 - w^*\| + \left(\frac{C_2}{\sqrt{\beta}} + C_3 \theta^r\right) \|w_0 - w^*\| \\
	& \leq \tfrac{1}{4} \|w_0 - w^*\| + \tfrac{1}{4}\|w_0 - w^*\| \\
	&= \tfrac{1}{2} \|w_0 - w^*\| .
	\end{align*}
	Now suppose that \eqref{halfing} is true for $k^{th}$ iteration. Then, 
	\begin{align*}
	\E [\E_k\|w_{k+1} - w^*\|] &\leq C_1\E[\|w_k - w^*\|^2]+  \left(\frac{C_2}{\sqrt{\beta}} + C_3 \theta^r\right) \E[\|w_k - w^*\|]\\
	&\leq \gamma C_1\E[\|w_k - w^*\|] \, \E[\|w_k - w^*\|] +  \left(\frac{C_2}{\sqrt{\beta}} + C_3 \theta^r\right) \E[\|w_k - w^*\|]\\
	& \leq \tfrac{1}{4}\E [\|w_k - w^*\|] + \tfrac{1}{4}\E[\|w_k - w^*\|] \\
	& \leq \tfrac{1}{2}\E[\|w_k - w^*\|] .
	\end{align*}
\end{proof}

We note that condition (\ref{sampleb}) may require a value for $\beta$ greater than $N$. In this case the proof of Theorem \ref{thm-cg} is clearly still valid if we sample with replacement, but this is a wasteful strategy since it achieves the bound $|S_k| > N$ by repeating samples. If we wish to sample without replacement in this case, we can set $\beta =N$. Then our Hessian approximation is exact and the $C_2$ term is zero, so the proof still goes through and Theorem \ref{thm-cg} holds.

This result was established using the worst case complexity bound of the CG method. We know, however, that  CG converges to the solution in at most $d$ steps.  Hence, the bound on the maximum number of iterations needed to obtain a linear rate of convergence with constant 1/2 is
\begin{equation} \label{min-r}
r = \min \left\{d, \frac{\log \left(16L\sqrt{\delta(\beta)}/\mu_{\beta}\right) }{\log\left(\frac{\sqrt{\delta(\beta) }+ 1}{\sqrt{\delta(\beta)} - 1}\right)} \right\} .
\end{equation} 

\medskip  					
\paragraph{Convergence Rate Controlled by  Residual Test.}
In many practical implementations of inexact Newton methods, the CG iteration is stopped based on the norm of the residual in the solution of the linear system \eqref{lins}, rather than on a prescribed maximum number $r$ of CG iterations \cite{DembEiseStei82}. For the system \eqref{linss}, this residual-based termination test is given by
\begin{equation}
\|\nabla^2 R_{S_k}(w_k)p_k^r - \nabla R(w_k)\| \leq \zeta\|\nabla R(w_k)\| ,
\end{equation}
where $\zeta <1$ is a control parameter. Lemma~\ref{quadl} still applies in this setting, but with a different constant $C_3  \theta^r$. Specifically, Term~5 in \eqref{twott} is modified as follows:
\begin{align*}
\E_k[\|p_k^r + \nabla^2 R_{S_k}^{-1}(w_k)\nabla R(w_k)\|] &\leq \E_k[\|\nabla^2 R_{S_k}^{-1}(w_k)\|\|\nabla^2 R_{S_k}(w_k)p_k^r - \nabla R(w_k)\|] \\
& \leq \frac{\zeta}{\mu_{\beta}} \|\nabla R(w_k)\| \\
& \leq \frac{L \zeta}{\mu_{\beta}}\|w_k -w^*\|.
\end{align*}
Hence, comparing with \eqref{eder},  we now have that $C_3  \theta^r = \frac{L}{\mu_{\beta}}\zeta$.
To obtain linear convergence with constant 1/2, we must impose a bound on the parameter  $ \zeta$, so as to match the analysis in 
Theorem~\ref{thm-cg}, where we required that $C_3  \theta^r \leq \frac{1}{8}$. This condition is satisfied if  
\begin{equation*}
\zeta \leq \frac{\mu_{\beta}}{8L}   .
\end{equation*}
Thus, the parameter $\zeta$ must be inversely proportional to a quantity related to the condition number of the Hessian.  


We conclude this section by remarking that the results presented in this section may not reflect the full power of the subsampled Newton-CG method since we  assumed the worst case behavior of CG, and as noted in \eqref{fine}, the per-iteration progress can be much faster than in the worst case.

     \section{ Comparison with Other Methods}  \label{complex}
     We now ask whether the CG method is, in fact, an efficient linear solver when employed in the  inexact subsampled Newton-CG method \eqref{ncg}-\eqref{linss},  or whether some other iterative linear solver could be preferable. Specifically, we compare CG with a semi-stochastic gradient iteration (SGI) that is described below; we denote the variant of \eqref{ncg}-\eqref{linss} that uses the SGI iteration to solve linear systems as the Newton-SGI method. Following  Agarwal et al. \cite{agarwal2016second}, we measure efficiency by estimating the total amount of Hessian-vector products required to achieve  a local linear rate of convergence with convergence constant 1/2 (the other costs of the algorithms are identical).   
     
To present the complexity results of this section we introduce the following definitions of condition numbers:      
 
      \begin{equation*}
      \hat{\kappa} = \frac{\bar{L}}{\mu}\, , \quad \text{   }\\ \hat{\kappa}^{\max} = \frac{\bar{L}}{\bar{\mu}}\,  \quad \text{  and }\\  \kappa = \frac{L}{\mu}.
      \end{equation*}
     
     \paragraph{Newton-CG Method.}
     Since each CG iteration requires 1 Hessian-vector product, every iteration of the inexact subsampled Newton-CG method requires $\beta \times r$ Hessian-vector products, where $\beta=|S_k|$ and $r$ is the number of CG iterations performed.
     
By the definitions in Assumptions I, we have that $\sigma^2$ is bounded by a multiple of $\bar{L}^2$. Therefore, recalling the definition \eqref{c3} we have that the sample size stipulated in Theorem~\ref{thm-cg} and \eqref{c3} satisfies
     \begin{align*}
     \beta = O(C_2^2)=O(\sigma^2 /\bar{\mu}^2)=O((\hat{\kappa}^{\max})^2).
     \end{align*}
Now, from \eqref{min-r} the number of CG iterations satisfies the bound  
     \begin{align*}
     r &=O\left(\min \left\{d, \frac{\log((L/\mu_{\beta})\sqrt{\delta(\beta)})}{\log\left(\frac{\sqrt{\delta{(\beta)}} + 1}{\sqrt{\delta(\beta)} - 1}\right)}\right\}\right)  \\
     &=O\left(\min \bigg \{d, \sqrt{\hat{\kappa}^{\max}}\log(\hat{\kappa}^{\max})\bigg\}\right) ,
     \end{align*}
     where the last equality is by the fact that $\delta(\beta) \leq \hat{\kappa}^{\max}$ and $L \leq \bar{L}$.
     Therefore, the number of Hessian-vector products required by the Newton-CG method to yield a linear convergence rate with constant of 1/2 is
     \begin{align}    \label{reversal}
     O(\beta r)= O\left((\hat{\kappa}^{\max})^2\min \bigg\{d, \sqrt{\hat{\kappa}^{\max}}\log(\hat{\kappa}^{\max})\bigg\}\right) .
     \end{align}

     \paragraph{Newton-SGI Method.} To motivate this method, we first note that a step of the classical Newton method is given by the minimizer of the quadratic model
      \begin{equation}   \label{quadratic}
     Q(p) = R(w_k) + \nabla R(w_k)^Tp + \frac{1}{2}p^T\nabla^2 R(w_k) p.
     \end{equation}
     We could instead minimize $Q$ using the gradient method,
          \[
     p_{t+1} = p_t - \nabla Q(p_t) = (I - \nabla^2 R(w_k))p_t  - \nabla R(w_k), 
     \]
but the cost of the Hessian-vector product in this iteration is expensive. Therefore, one can consider  the semi-stochastic gradient iteration (SGI)
           \begin{equation} \label{sgi}
     p_{t+1} = 
     (I - \nabla^2 R_i(w_k))p_t  - \nabla R(w_k), 
     \end{equation}  
     where the index $i$ is chosen at random from $\{1, \ldots, N\}$.  
     We define the Newton-SGI  method by $w_{k+1}= w_k
 +p_r$, where $p_r$ is the iterate obtained after applying $r$ iterations of \eqref{sgi}.
     
 Agarwal et al. \cite{agarwal2016second} analyze a method they call LiSSA that is related to this Newton-SGI method. Although they present their method as one based on a power expansion of the inverse Hessian, they note in \cite[section 4.2]{agarwal2016second} that, if the outer loop in their method is disabled (by setting $S_1=1$), then their method is equivalent to our Newton-SGI method. They provide a complexity bound for the more general version of the method in which  they compute $S_2$ iterations of \eqref{sgi}, repeat this $S_1$ times, and then calculate the average of all the solutions to define the new iterate. They provide a bound,  in probability, for one step of their overall method, whereas our bounds for Newton-CG are in expectation. In spite of these differences,  it is interesting to compare the complexity  of the two methods. 
     
     The number of Hessian-vector products for LiSSA (which is given  $O(S_1S_2)$ in their notation) is
     \begin{align}
     O((\hat{\kappa}^{\max})^2\hat{\kappa}\log(\hat{\kappa}) \log(d))
     \end{align}

When comparing this estimate with \eqref{reversal} we observe that the Newton-CG bounds depend on the square root of a condition number, whereas Newton-SGI depends on the condition number itself. Furthermore,  Newton-CG also has an improvement of $ \log(d)$ because our proof techniques avoid the use of matrix concentration bounds.

     \paragraph{Newton Sketch.} Let us also consider the Newton sketch algorithm recently proposed  in \cite{pilanci2015newton}.  This paper provides a linear-quadratic convergence rate \cite[Theorem 1]{pilanci2015newton} similar in form to \eqref{threet}, but stated in probability. In order to achieve local linear convergence, their user-supplied parameter $\epsilon$ should be of the order $1/\kappa$. Therefore the sketch dimension $m$ is given by
     \begin{align*}
     m=O\left(\frac{W^2}{\epsilon^2}\right)= O(\kappa^2 \min\{n,d\}),
     \end{align*}
     where the second equality follows from the fact that the square of the Gaussian width $W$ is at most $\min\{n,d\}$.
     The per iteration cost of Newton sketch with a randomized Hadamard transform is
     \begin{equation*}
     O(nd\log(m) + dm^2) = O(nd\log(\kappa d) + d^3\kappa^4),
     \end{equation*}   
     and the algorithm is designed for the case  $n>>d$. 
     
     We note in passing that certain implicit assumptions are made about the algorithms discussed above when the objective is given by the finite sum $R$. In subsampled Newton methods, it is assumed that the number of subsamples is less than the number of examples $n$; in Newton sketch,  the sketch dimension is assumed to be less than $n$. 
     This implies that for all these methods one makes the implicit assumption that $n> \kappa^2$.   
%
     We should also note that in all the stochastic second order methods, the number of samples or sketch dimension required by the theory is $\kappa^2$, but in practice a small number of samples or sketch dimensions suffice to give good performance. 
This suggests that the theory could be improved and that techniques other than concentration bounds might help in achieving this. 
   
      \paragraph{Work complexity to obtain an $\epsilon$-accurate solution.}   
 Table 4.1  compares  a variety of methods in terms of the total number of gradient and Hessian-vector products required to obtain an $\epsilon$-accurate solution. The results need to be interpreted with caution as the convergence rate of the underlying methods differs in nature, as we explain below. Therefore, Table~4.1 should be regarded mainly as summary of results in the literature and not as a simple way to rank methods. In stating these results, we assume that  the cost of a Hessian-vector product is same as the cost of a gradient evaluation, which is realistic in many (but not all) applications. 
 
      \begin{table}[htp]
     	\centering
      	\caption{Time complexity to obtain an $\epsilon$ accurate solution. Comparison of the Newton-CG(Inexact) method analyzed in this paper with other well known methods. The third column reports orders of magnitude.}
     	\label{complexity}
     	\setlength{\tabcolsep}{10pt} 
     	\renewcommand{\arraystretch}{1.5}
     	\begin{tabular}{|c|c|c|c|}
     		\hline
     		Method                         & Convergence & Time to reach $\epsilon$-accurate solution. & Reference           \\ \hline
     		SG                             & Global         & $\frac{d\omega \kappa^2}{\epsilon}$  &      \cite{BottLeCu04}                                          \\ 
     		DSS                             & Global         & $\frac{dv \kappa}{\mu\epsilon}$   & \cite{byrd2012sample}                                     \\
     		GD                             & Global         & $nd\kappa \log(\frac{1}{\epsilon})$  &  \cite{mybook}                                      \\ 
     		Newton                      & Local          & $nd^2\log\log(\frac{1}{\epsilon})$  &  \cite{mybook}                                \\ 
     		Newton-CG(Exact)          & Local          & $(n + (\hat\kappa^{\max})^2d)d\log(\frac{1}{\epsilon})$  & [{\small This paper}]                 \\ 
     		{\bf Newton-CG(Inexact)} & Local          & $(n + (\hat\kappa^{\max})^2\sqrt{\hat\kappa^{\max}})d\log(\frac{1}{\epsilon})$ & [{\small This paper}]                                      \\ 
     		LiSSA                          & Local          & $(n + (\hat\kappa^{\max})^2\hat{\kappa})d\log(\frac{1}{\epsilon})$ & \cite{agarwal2016second}                                             \\ 
     		Newton Sketch               & Local    & $(n + \kappa^4d^2)d\log(\frac{1}{\epsilon})$ &  \cite{pilanci2015newton}                          \\ 
     		
  \hline
     	\end{tabular}
     \end{table}

 In Table 4.1, SG is the classical stochastic gradient method with diminishing step-sizes. The complexity results of SG do not depend on $n$ but depend on $\kappa^2$, and are inversely proportional to $\epsilon$ due to its sub-linear rate of convergence. The constant $\omega$ is the trace of the inverse Hessian times a covariance matrix; see \cite{BottouBosq08}.  DSS is subsampled gradient method where the Hessian is the identity (i.e., no Hessian subsampling is performed) and the gradient sample size $|X_k|$ increases at a geometric rate. The complexity bounds for this method are also independent of $n$, and depend on $\kappa$ rather than $\kappa^2$ as in SG.

GD  and  Newton are the classical deterministic gradient descent and Newton methods. 
   Newton-CG (Exact and Inexact) are the subsampled Newton methods  discussed in this paper. In these methods, $(\hat{\kappa}^{\max})^2$ samples are used for Hessian sampling, and the number of inner CG iterations is of order $O(d)$ for the exact method, and $O(\sqrt{\hat{\kappa}^{\max}}$)  for the inexact method. LiSSA is the method proposed in \cite{agarwal2016second}, wherein the inner solver is a semi-stochastic gradient iteration; i.e., it is similar to our Newton-SGI method but we quote the complexity results from \cite{agarwal2016second}.  The bounds for LiSSA differ from those of Newton-CG in a square root of a condition number. Newton-Sketch has slightly higher cost compared to Newton-CG and LiSSA.  
   
    
{\em  A note of caution:} Table 4.1 lists methods with different types of convergence results.  For GD and Newton, convergence is deterministic; for SG, DSS and Newton-CG(Exact \& Inexact), convergence is in expectation; and for LiSSA and Newton Sketch the error (for a given iteration) is in probability. The definition of an $\epsilon$-accurate solution also varies. For all the first order methods (SG, DSS, GD)  it represents accuracy in function values;  for all the second-order methods (Newton, Newton-CG, LiSSA, NewtonSketch) it represents accuracy in the iterates ($\|w-w^*\|$). Although for a strongly convex function, these two measures are related, they involve a different constant in the complexity estimates.

\section{Numerical Experiments}   \label{numerical}
\setcounter{table}{0}
We conducted numerical experiments to illustrate the performance of the inexact subsampled Newton methods discussed in section~\ref{inexactn}. We consider binary classification problems where the training objective function is given by the logistic loss  with $\ell_2$ regularization:
\begin{equation} \label{logistic-loss}
R(w)=\frac{1}{N} \sum_{i=1}^{N}\log(1 + \exp(-y^iw^Tx^i)) + \frac{\lambda}{2}\|w\|^2 .
\end{equation} 
The regularization parameter  is chosen as $ \lambda = \frac{1}{N}$. 
The iterative linear solvers, CG and SGI, require Hessian-vector products, which are computed through the formula
\begin{align*}
\nabla^2 R_{s_k}(w) p = \frac{1}{|S_k|}\sum_{i \in S_k}\frac{\exp(-y^iw^Tx^i)}{(1 + \exp(-y^iw^Tx^i))^2}x^i((x^i)^Tp) + \lambda p .
\end{align*} 

Table~5.1 summarizes the datasets used for the experiments.  Some of these datasets divide the data into training and testing sets; for the rest, we randomly divide the data so that the training set constitutes  70\% of the total. In Table~5.1, $N$ denotes the total number of examples in a data set, including  training and testing points.

\begin{table}[htp]
	\centering
	\label{data sets}
	\begin{tabular}{|c||c|c|c|}
		\hline
		Data Set  & Data Points $N$ & Variables $d$ & Reference \\ \hline
		MNIST    & 70000         & 784         &      \cite{lecun2010mnist}     \\ 
		Covertype & 581012       & 54          &           \cite{blackard1999comparative}\\ 
		Mushroom & 8124          & 112         &          \cite{Lichman2013} \\ 
		Synthetic & 10000          & 50          &           \cite{mukherjee2013parallel}\\ 
		CINA      & 16033         & 132         &          \cite{CINA} \\ 
		Gisette   & 7000		   & 5000		 &			\cite{guyon2004result} \\ \hline
	\end{tabular}
	\caption{A description of binary datasets used in the experiments}
\end{table}
\newpage
The following methods were tested in our experiments. 
\begin{itemize}
	\item[] {\bf GD}. The gradient descent method  $w_{k+1}= w_k - \alpha_k \nabla R(w_k)$.
	\item[] {\bf Newton}. The exact Newton method $w_{k+1}= w_k + \alpha_k p_k$, where $p_k$ is the solution of the system 
	$	      \nabla^2R(w_k)p_k = - \nabla R(w_k)$
	computed to high accuracy by the conjugate gradient method.
	\item[ ] {\bf Newton-CG}. The inexact subsampled Newton-CG method $w_{k+1}= w_k + \alpha_k p^r_k$, where $p_k^r$ is an inexact solution of the linear system \begin{equation}  \label{repeat}
	\nabla^2R_{S_k}(w_k)p_k = - \nabla R(w_k)
	\end{equation} computed  using the conjugate gradient method. The set $S_k$ varies at each iteration but its cardinality $|S_k|$ is constant.
	\item[ ] {\bf Newton-SGI}. The inexact subsampled Newton-SGI method $w_{k+1}= w_k + \alpha_k p_k$, where $p_k$ is an inexact solution of \eqref{repeat} computed by the stochastic gradient iteration \eqref{sgi}. 
\end{itemize}
All  these methods implement an Armijo back tracking line search to determine the steplength $\alpha_k$, employ the full gradient $\nabla R(w)$,  and differ in their use of second-order information. In the Newton-CG method, the CG iteration is terminated when one of the following two conditions is satisfied:
\begin{equation}   \label{twocond}
\|\nabla^2 R_{S_k}(w_k)p_k^j - \nabla R(w_k)\| \leq \zeta \|\nabla R(w_k)\| \qquad\mbox{or} \quad j ={\rm max}_{cg},
\end{equation}
where $j$ indices the CG iterations.
The parameters in these tests were set as $\zeta=0.01$ and ${\rm max}_{cg}=10$, which are common values  in practice.  These parameter values were chosen beforehand and were not  tuned to our test set.



In all the figures below, \emph{training error} is defined as $R(w) - R(w^*)$, where $R$ is defined in terms of the data points given by the training set; \emph{testing error} is  defined as $R(w)$, without the regularization term (and using the data points from the test set). 

We begin by reporting results on the {\tt Synthetic} dataset, as they  are representative of what we have observed in our experiments. Results on the other datasets are given in the Appendix.
In Figure~5.1, we compare GD, Newton and three variants of the Newton-CG method with sample sizes $|S_k|$ given as  5\%, 10\% and 50\% of the training data. We generate two plots:  a) Training error vs. iterations; and b) Training error vs. \emph{number of effective gradient evaluations}, by which we mean that each Hessian-vector product is equated with a gradient and function evaluation. In Figure~5.2 we  plot testing error vs. time. Note that the dominant computations in these experiments are  gradient evaluations, Hessian-vector products, and  function evaluations in the line search. 

\begin{figure}[!htp]
	\label{synthetic-ex1a} 
	\includegraphics[width=1.0\linewidth]{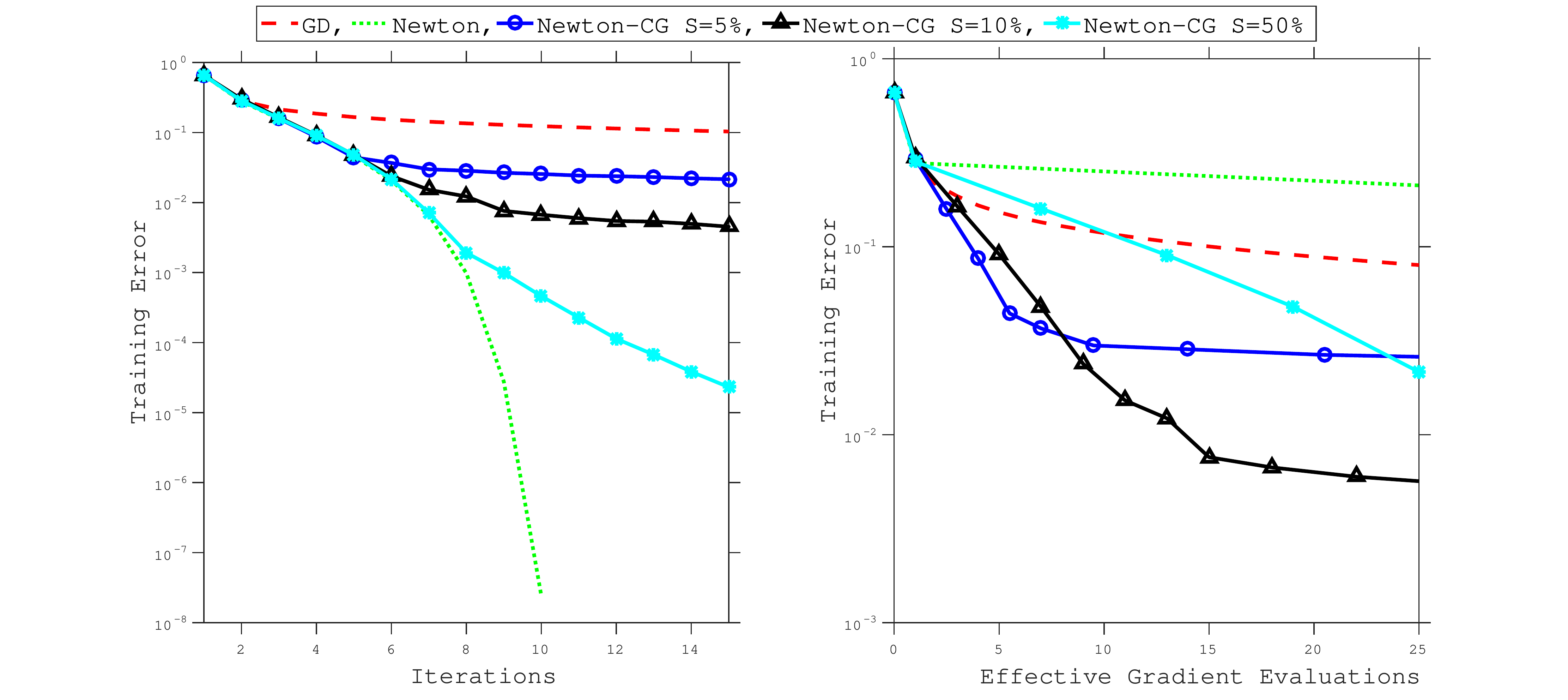}
	\caption{ {\bf Synthetic Dataset}: Performance of the inexact subsampled Newton method (Newton-CG), using three values of the sample size, and of the GD and Newton methods. Left: Training Error vs. Iterations; Right: Training Error vs. Effective Gradient Evaluations.}
\end{figure}  
\begin{figure}[!htp]
	\label{synthetic-ex1b} 
	\includegraphics[width=1.0\linewidth]{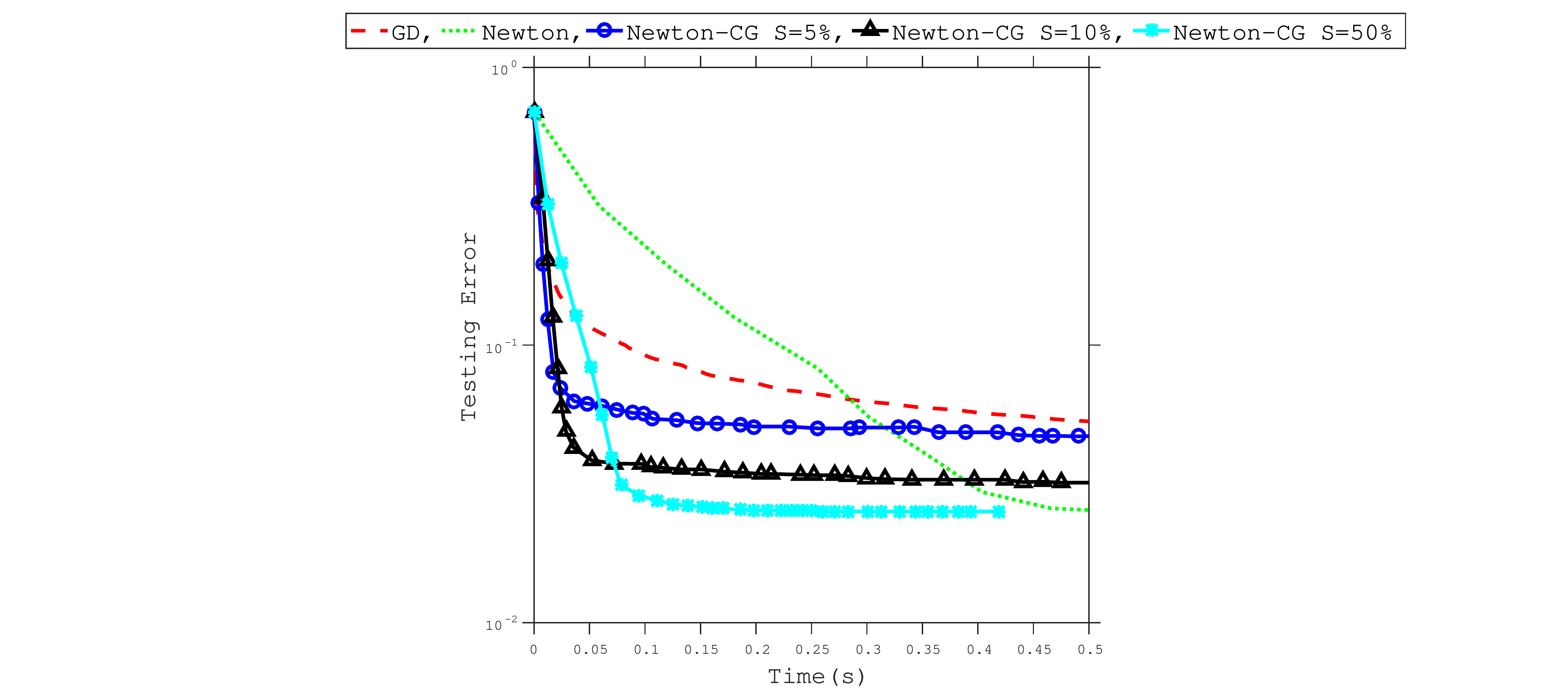}
	\caption{ {\bf Synthetic Dataset}: Comparison of the 5 methods in Figure~5.1, this time plotting Testing Error vs. CPU Time. }
\end{figure}              
\newpage         

Results comparing GD, Newton and Newton-CG on the rest of the test problems are given in the Appendix.

In the second set of experiments, reported in Figures~5.3 and 5.4, we compare  Newton-CG and Newton-SGI, again on the {\tt Synthetic} dataset. We note that Newton-SGI  is  similar to the method denoted as LiSSA in \cite{agarwal2016second}. That method contains an outer iteration that averages iterates, but in the tests reported in  \cite{agarwal2016second}, the outer loop was disabled (by setting their parameter $S_1$ to 1), giving rise to the Newton-SGI iteration.  To guarantee convergence of the SGI iteration \eqref{sgi} (which uses a unit steplength) one must ensure that the spectral norm of the Hessian for each data point is strictly less than 1; we enforced this by rescaling the data. To determine the number of  inner stochastic gradient iterations in SGI, we proceeded as follows. First, we chose \emph{one} sample size $\beta= |S|$ for  the Newton-CG method, as well as the maximum number ${\rm max}_{cg}$ of CG iterations. Then, we set the number of SGI iterations to be
$It=\beta \times {\rm max}_{cg}$, so that the per iteration number of Hessian-vector products in the two methods is similar.  We observe from Figure~5.3 that Newton-CG and Newton-SGI perform similarly in terms of effective gradient evaluations, but note from Figure~5.4 the Newton-SGI has higher computing times due to the additional communication cost involved in  individual Hessian-vector products. Similar results can be observed for the test problems in the Appendix.


\begin{figure}[!htp]
	\label{synthetic-ex2a} 
	\includegraphics[width=1\linewidth]{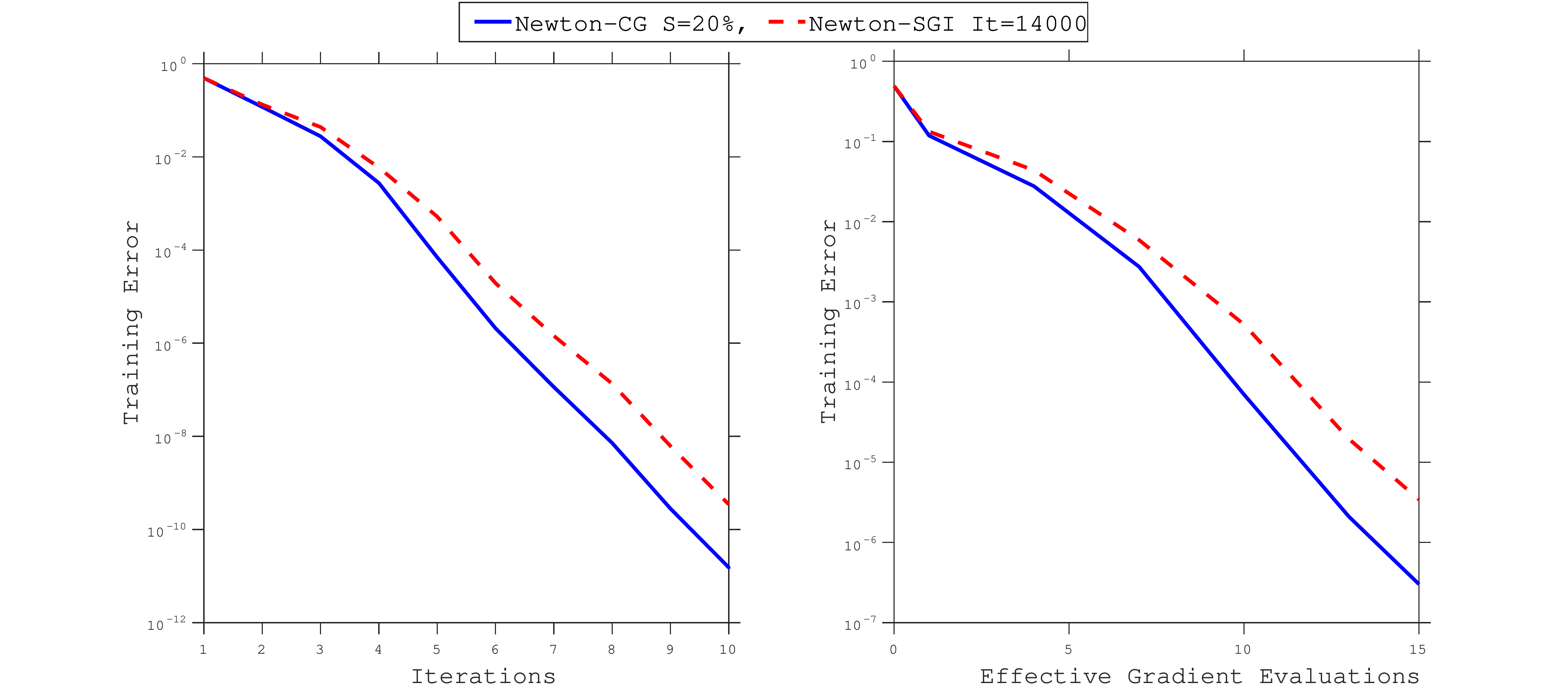}
	\caption{ {\bf Synthetic Dataset (scaled)}: Comparison of Newton-CG and Newton-SGI. Left: Training Error vs. Iterations; Right: Training Error vs. Effective Gradient Evaluations. Here {\tt It} denotes the number of iterations of  the SGI  algorithm \eqref{sgi} performed at every iteration of Newton-SGI.}
\end{figure}
\begin{figure}[!htp]
	\label{synthetic-ex2b} 
	\includegraphics[width=1\linewidth]{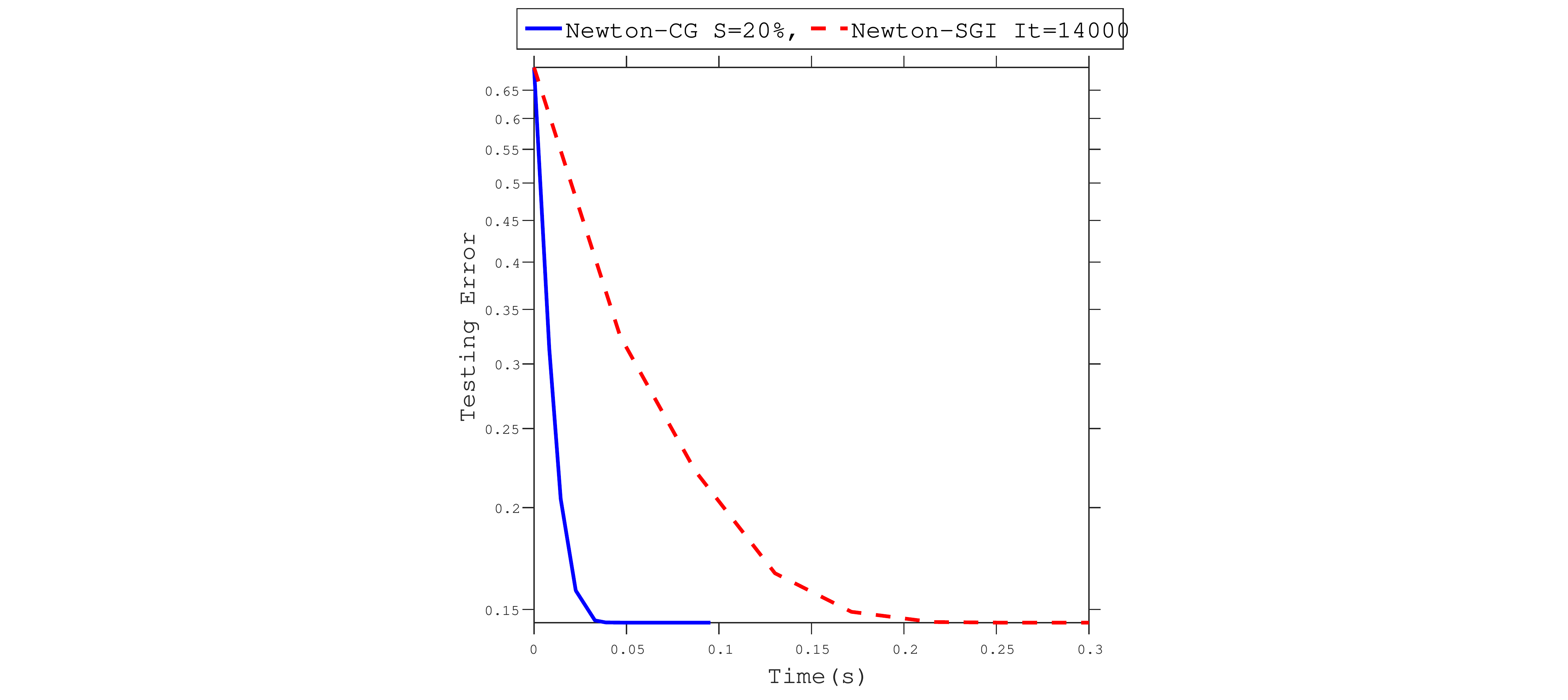}
	\caption{ {\bf Synthetic Dataset (scaled)}: Comparison of Newton-CG with Newton-SGI, this time plotting Testing Error vs. Time.}
\end{figure}           

In the third set of experiments, reported in Figures~5.5 and 5.6, we compare the Newton-CG and Newton-SGI methods on the datasets \emph{without scaling}, i.e., the spectral norm of the Hessians is now allowed to be greater than 1. To ensure convergence, we modify the SGI iteration \eqref{sgi} by incorporating a step-length parameter $\alpha_{sgi}$, yielding the following iteration:
\begin{equation} \label{sgi-unscaled}
p_{t+1} = p_t - \alpha_{sgi}\nabla Q_i(p_t) = (I - \alpha_{sgi}\nabla^2 F_i(w_k))p_t  - \alpha_{sgi}\nabla R(w_k)  .
\end{equation}    
The steplength parameter $\alpha_{sgi}$ was chosen as the value in  $\{2^{-20},\dots,2^3\}$ that gives best overall performance. 
\begin{figure}[!htp]
	\label{synthetic-ex3a} 
	\includegraphics[width=1\linewidth]{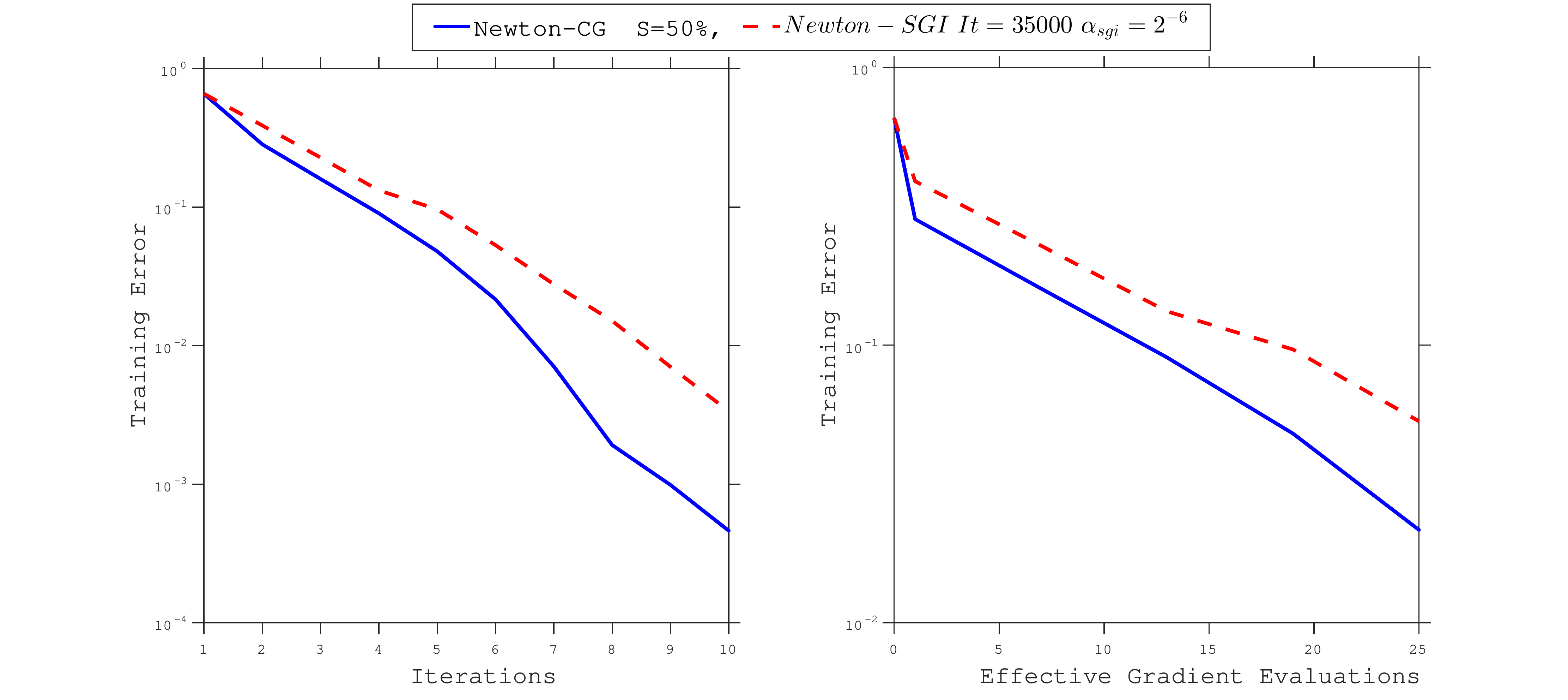}
	\caption{ {\bf Synthetic Dataset (unscaled)}: Comparison of Newton-CG with Newton-SGI. Left: Training Error vs. Iterations; Right: Training Error vs. Effective Gradient Evaluations. The parameter $\alpha_{sgi}$ refers to the steplength in \eqref{sgi-unscaled}.}
\end{figure} 
\begin{figure}[!htp]
	\label{synthetic-ex3b} 
	\includegraphics[width=1\linewidth]{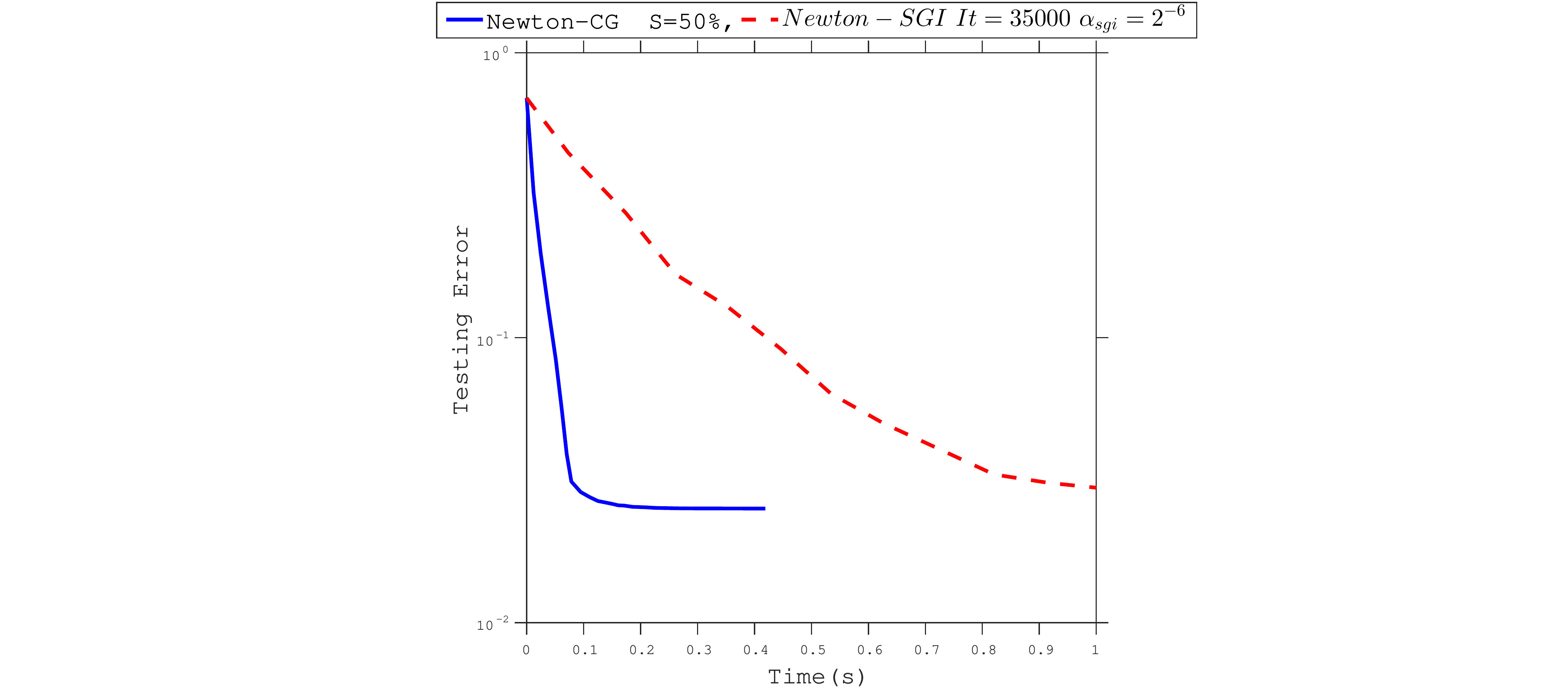}
	\caption{ {\bf Synthetic Dataset (unscaled)}: Comparison of Newton-CG with Newton-SGI, this time plottingTesting Error vs. Time. }
\end{figure}            
\newpage 

Results comparing Newton-CG and Newton-SGI on the rest of the test datasets are given in the Appendix. Overall, the numerical experiments reported in this paper suggest that the inexact subsampled Newton methods are quite effective in practice. Both versions, Newton-CG and Newton-SGI, are worthy of further investigation on a wider range of test problems. 

\section{Final Remarks}
Subsampled Newton methods  \cite{agarwal2016second,byrd2011use,byrd2012sample,erdogdu2015convergence,Martens10,roosta2016sub,xu2016sub}
are attractive in large-scale applications due to their ability to incorporate \emph{some}  second-order information at low cost. They are more stable than first-order methods and can yield a faster rate of convergence. In this paper, we  established conditions under which a method that subsamples the gradient and the Hessian enjoys a superlinear rate of convergence in expectation. To achieve this, the sample size used to estimate the gradient is increased at a rate that is faster than geometric, while the sample size for the Hessian approximation can increase at any rate.

The paper also studies the convergence properties of an inexact subsampled Newton method in which the step computation is performed by means of the conjugate gradient method. As in
\cite{agarwal2016second,erdogdu2015convergence,pilanci2015newton,roosta2016sub,xu2016sub} this
method employs the full gradient and approximates the Hessian by subsampling. We give bounds on the total amount of work needed to achieve a given linear rate of convergence, and compare these bounds with those given in \cite{agarwal2016second} for an inexact Newton method that solves linear systems using a stochastic gradient iteration. Computational work is measured by the number of evaluations of individual gradients and Hessian vector products.

Recent results on subsampled Newton methods \cite{erdogdu2015convergence,roosta2016sub,xu2016sub} establish a rate of decrease at every iteration, in probability. The results of this paper are stronger in that we establish convergence in expectation, but we note that in order to do so we introduced assumption \eqref{b1}. Recent work on subsampled Newton methods focuses on the effect of non-uniform subsampling  \cite{xu2016sub} but in this paper we consider only uniform sampling. 


The numerical results presented in this paper, although  preliminary,  make a good case for the value of subsampled Newton methods, and suggest that a more detailed and comprehensive investigation is worthwhile. We leave that study as a subject for future research. 

\bibliographystyle{plain}
\bibliography{InexactMethods}

\newpage
   \appendix
   \section{Additional Numerical Results} 
    Numerical results on the rest of the datasets listed in Table~5.1 are presented here.
      \begin{figure}[!htp]
       	\label{cina-ex1} 
       	\includegraphics[width=1.0\linewidth]{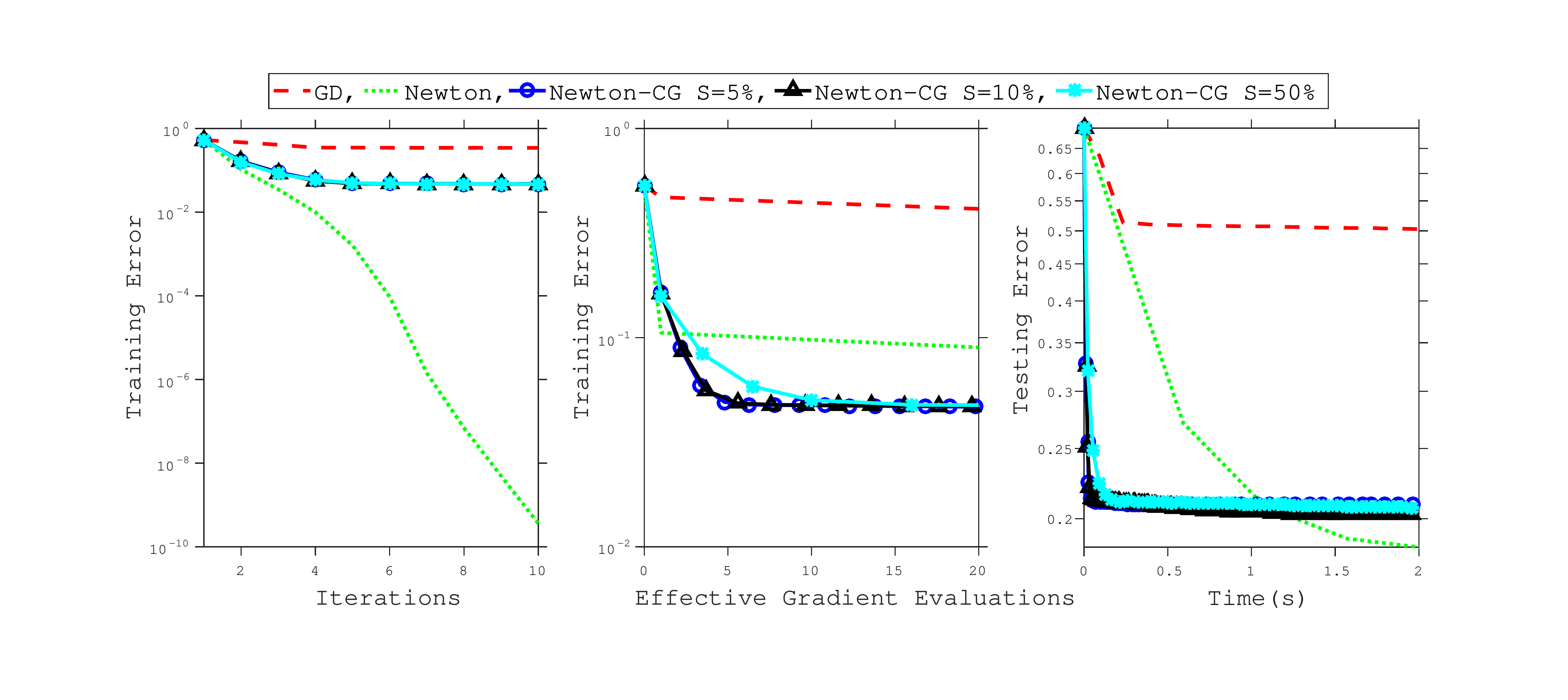}
       	\caption{ {\bf Cina Dataset}: Performance of the inexact subsampled Newton method (Newton-CG), using three values of the sample size, and of the GD and Newton methods. Left: Training Error vs. Iterations; Middle: Training Error vs. Effective Gradient Evaluations; Right: Testing Objective vs Time.}
      \end{figure}

       \begin{figure}[!htp]
       	\label{Cina-ex2} 
       	\includegraphics[width=1\linewidth]{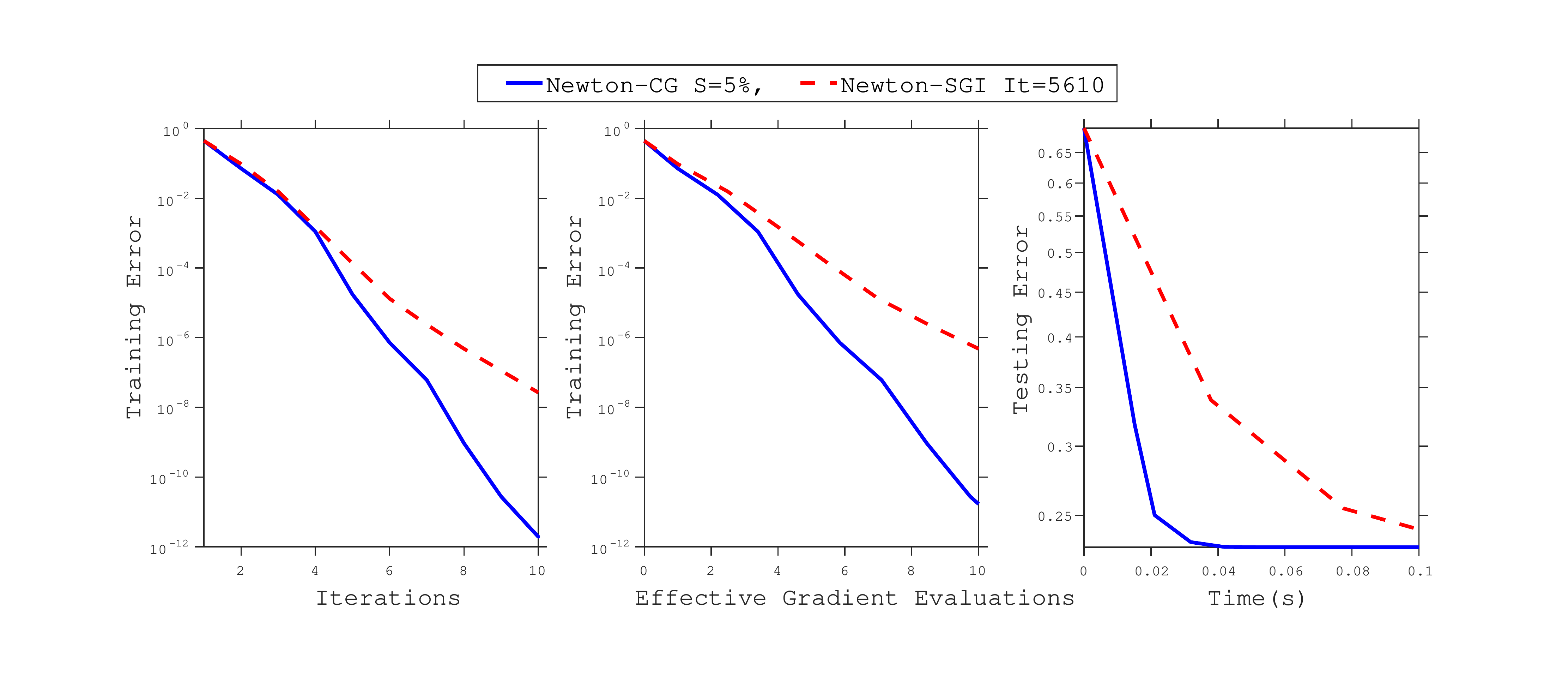}
       	\caption{ {\bf Cina Dataset (scaled)}: Comparison of Newton-CG with Newton-SGI. Left: Training Error vs Iterations; Middle: Training Error vs. Effective Gradient Evaluations; Right: Testing Error vs. Time.  The number of SGI iterations is determined through $It=|S|r$.}
       \end{figure}
    
           \begin{figure}[!htp]
              	\label{cina-ex3} 
              	\includegraphics[width=1\linewidth]{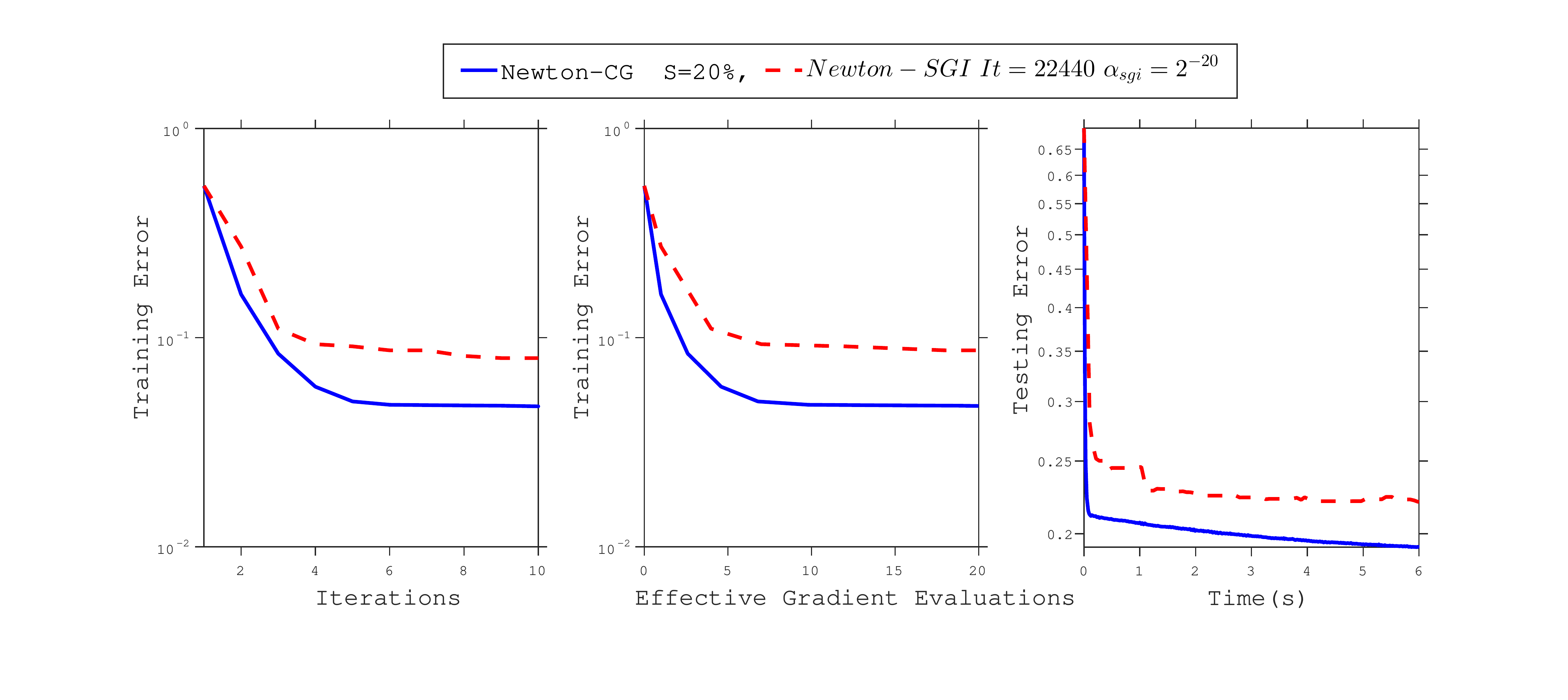}
              	\caption{ {\bf Cina Dataset (unscaled)}: Comparison of Newton-CG with Newton-SGI. Left: Training Error vs. Iterations; Middle: Training Error vs. Effective Gradient Evaluations; Right: Testing Objective vs. Time.}
           \end{figure}
           \newpage
           
     \begin{figure}[!htp]
     	\label{mushrooms-ex1} 
     	\includegraphics[width=1.0\linewidth]{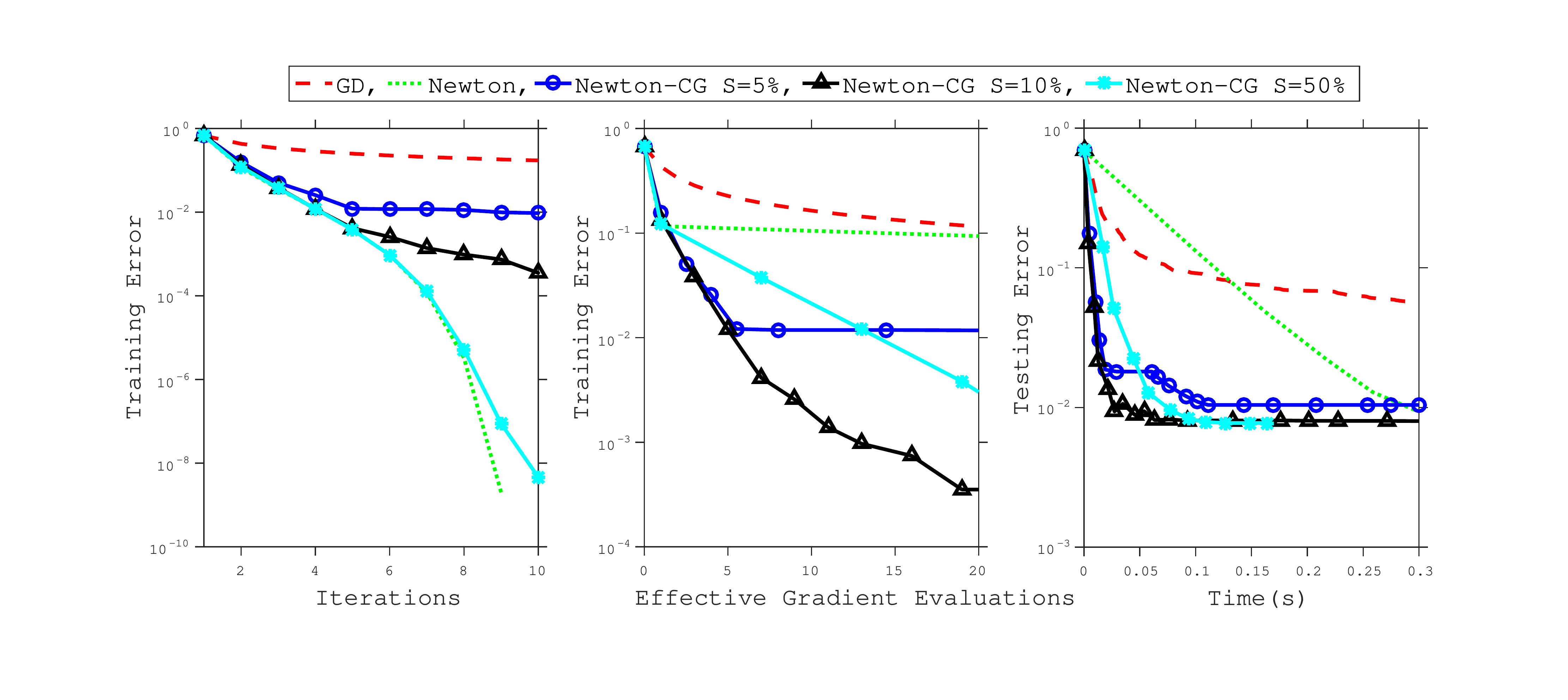}
     	\caption{ {\bf Mushrooms Dataset}: Performance of the inexact subsampled Newton method (Newton-CG), using three values of the sample size, against two other methods. Left: Training Error vs. Iterations; Middle: Training Error vs. Effective Gradient Evaluations; Right: Testing Objective vs. Time.}
     \end{figure}
     
     \begin{figure}[!htp]
     	\label{mushrooms-ex2} 
     	\includegraphics[width=1\linewidth]{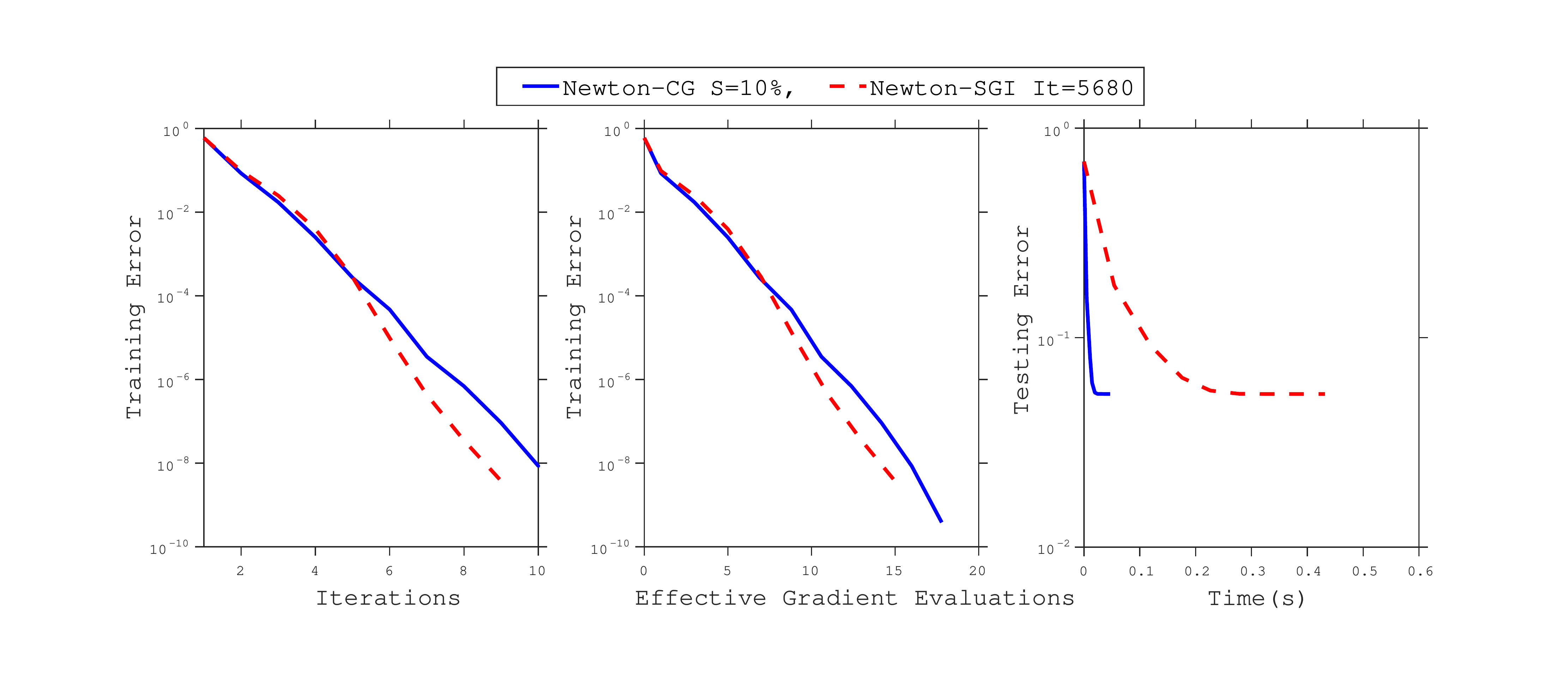}
     	\caption{ {\bf Mushrooms Dataset (scaled)}: Comparison of Newton-CG with Newton-SGI. Left: Training Error vs. Iterations; Middle: Training Error vs. Effective Gradient Evaluations; Right: Testing Error vs. Time.}
     \end{figure}
     
     \begin{figure}[!htp]
     	\label{mushrooms-ex3} 
     	\includegraphics[width=1\linewidth]{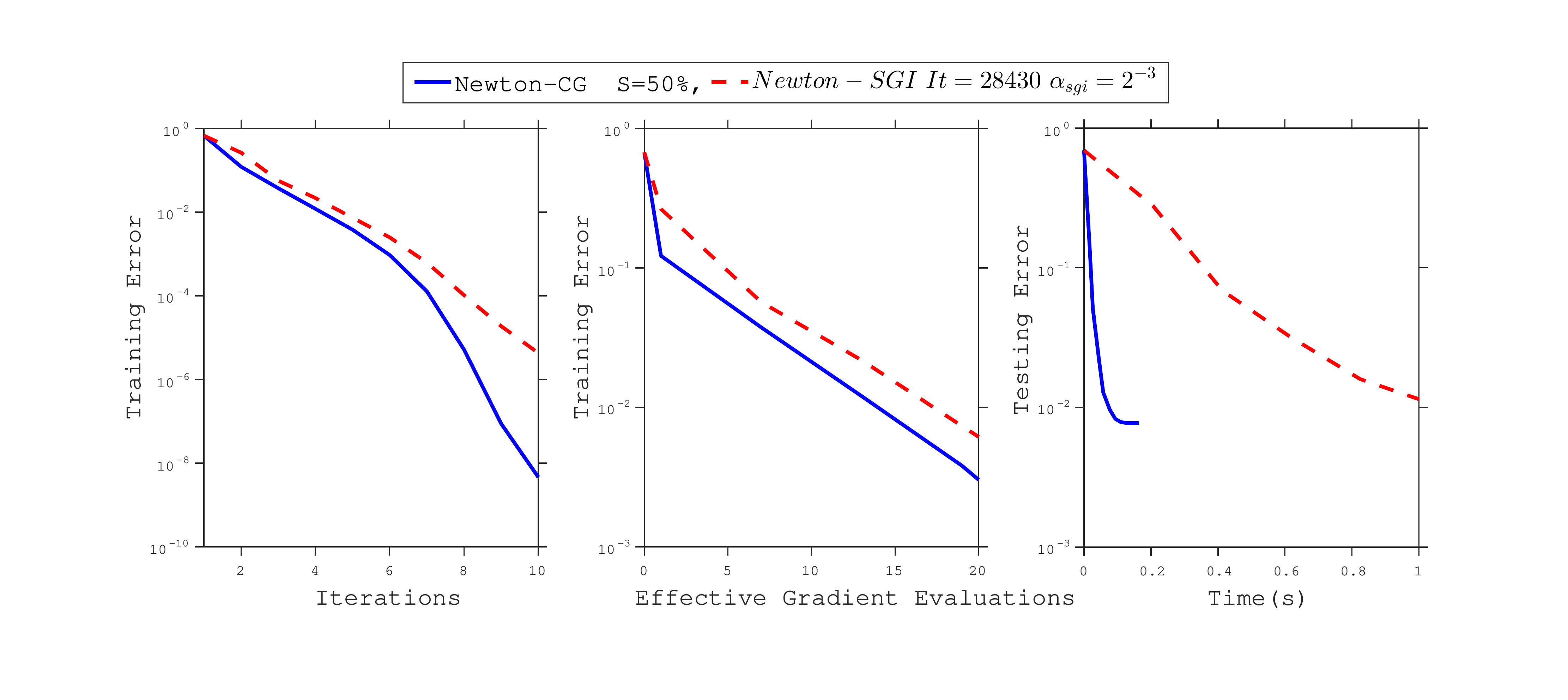}
     	\caption{ {\bf Mushrooms Dataset (unscaled)}: Comparison of Newton-CG with Newton-SGI. Left: Training Error vs. Iterations; Middle: Training Error vs. Effective Gradient Evaluations; Right: Testing Objective vs. Time.}
     \end{figure}
     \newpage
     \begin{figure}[!htp]
     	\label{mnist-ex1} 
     	\includegraphics[width=1.0\linewidth]{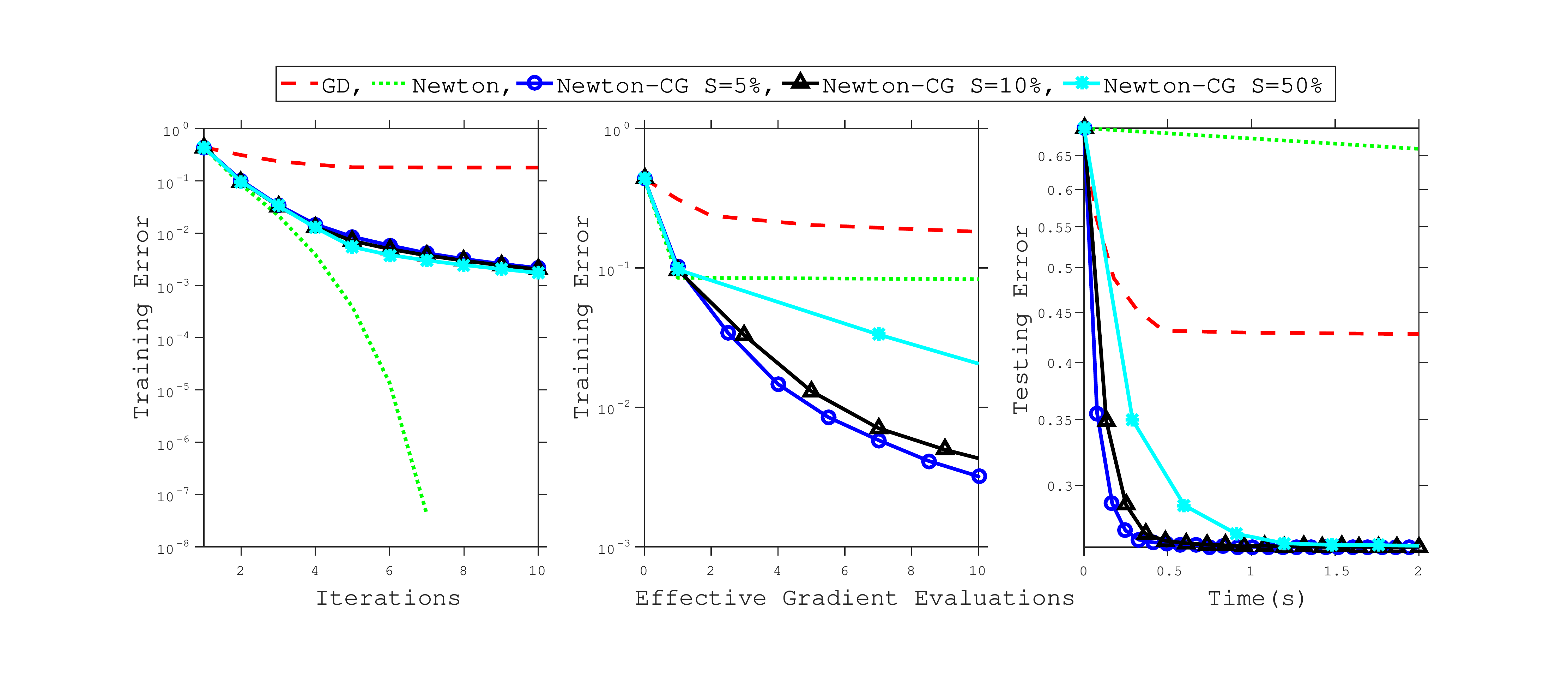}
     	\caption{ {\bf MNIST Dataset}: Performance of the inexact subsampled Newton method (Newton-CG), using three values of the sample size, and of the GD and Newton methods. Left: Training Error vs. Iterations; Middle: Training Error vs. Effective Gradient Evaluations; Right: Testing Objective vs. Time.}
     \end{figure}
     
     \begin{figure}[!htp]
     	\label{mnist-ex2} 
     	\includegraphics[width=1\linewidth]{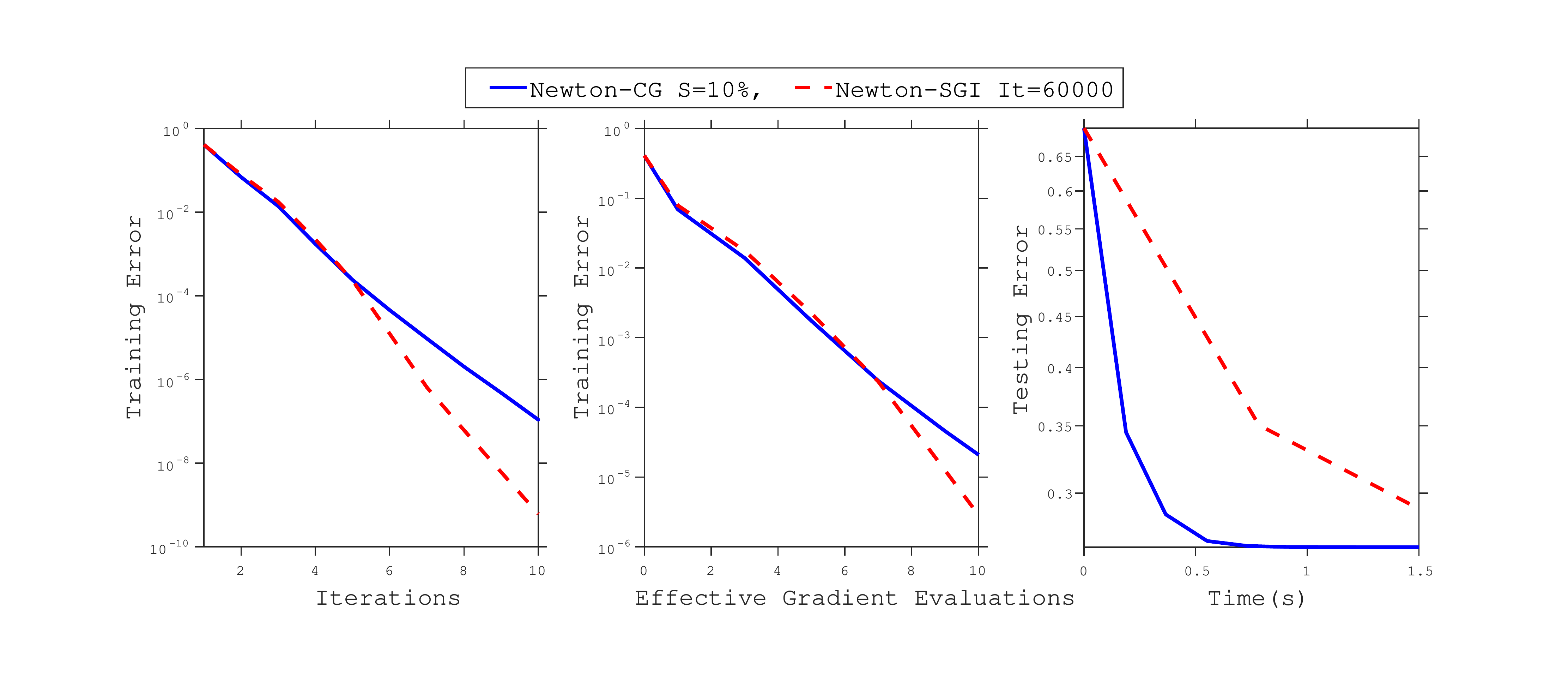}
     	\caption{ {\bf MNIST Dataset (scaled)}: Comparison of Newton-CG with Newton-SGI. Left: Training Error vs. Iterations; Middle: Training Error vs. Effective Gradient Evaluations; Right: Testing Error vs. Time.}
     \end{figure}
     
     \begin{figure}[!htp]
     	\label{mnist-ex3} 
     	\includegraphics[width=1\linewidth]{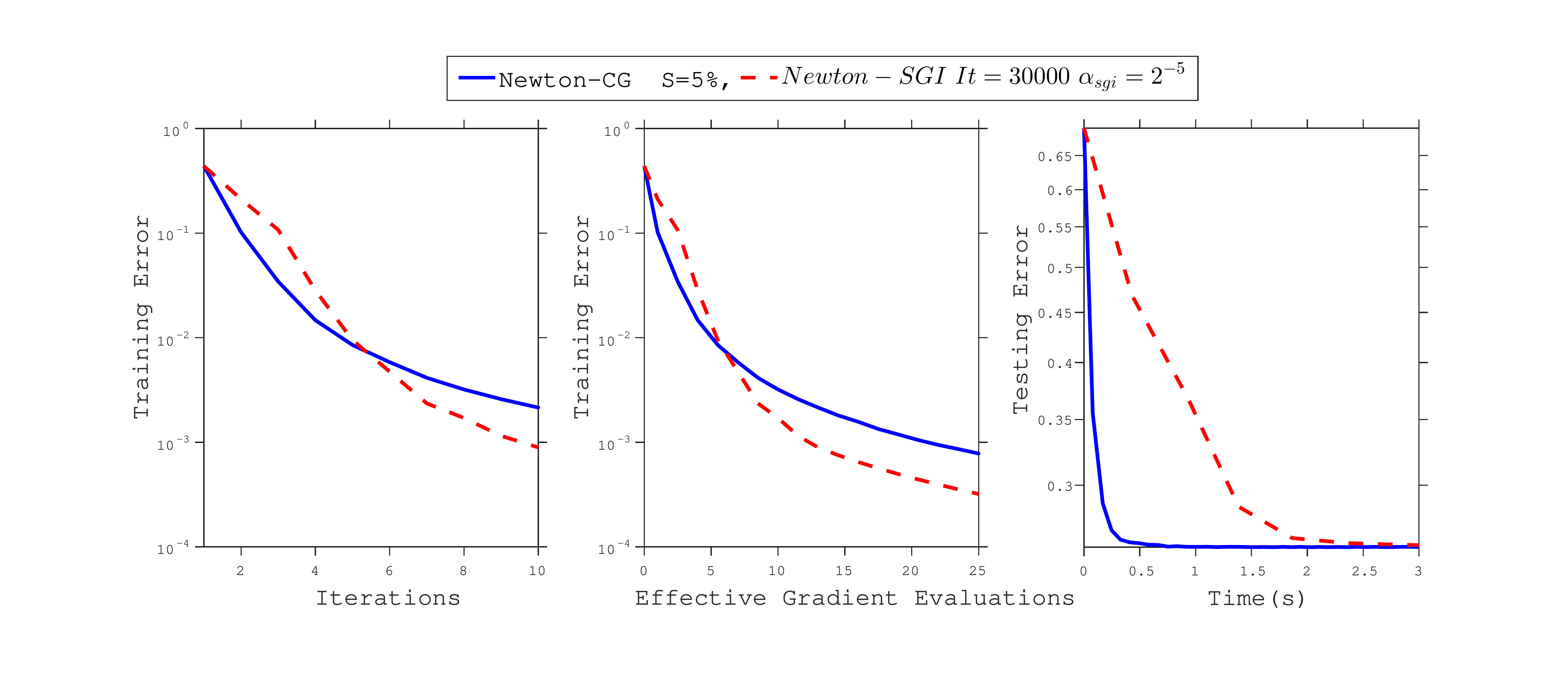}
     	\caption{ {\bf MNIST Dataset (unscaled)}: Comparison of Newton-CG with Newton-SGI. Left: Training Error vs. Iterations; Middle: Training Error vs. Effective Gradient Evaluations; Right: Testing Objective vs. Time.}
     \end{figure}
     
     {\bf Note:} The MNIST Datset has been used for binary classification of digits into even and odd.
     \newpage
     \begin{figure}[!htp]
     	\label{gisette-ex1} 
     	\includegraphics[width=1.0\linewidth]{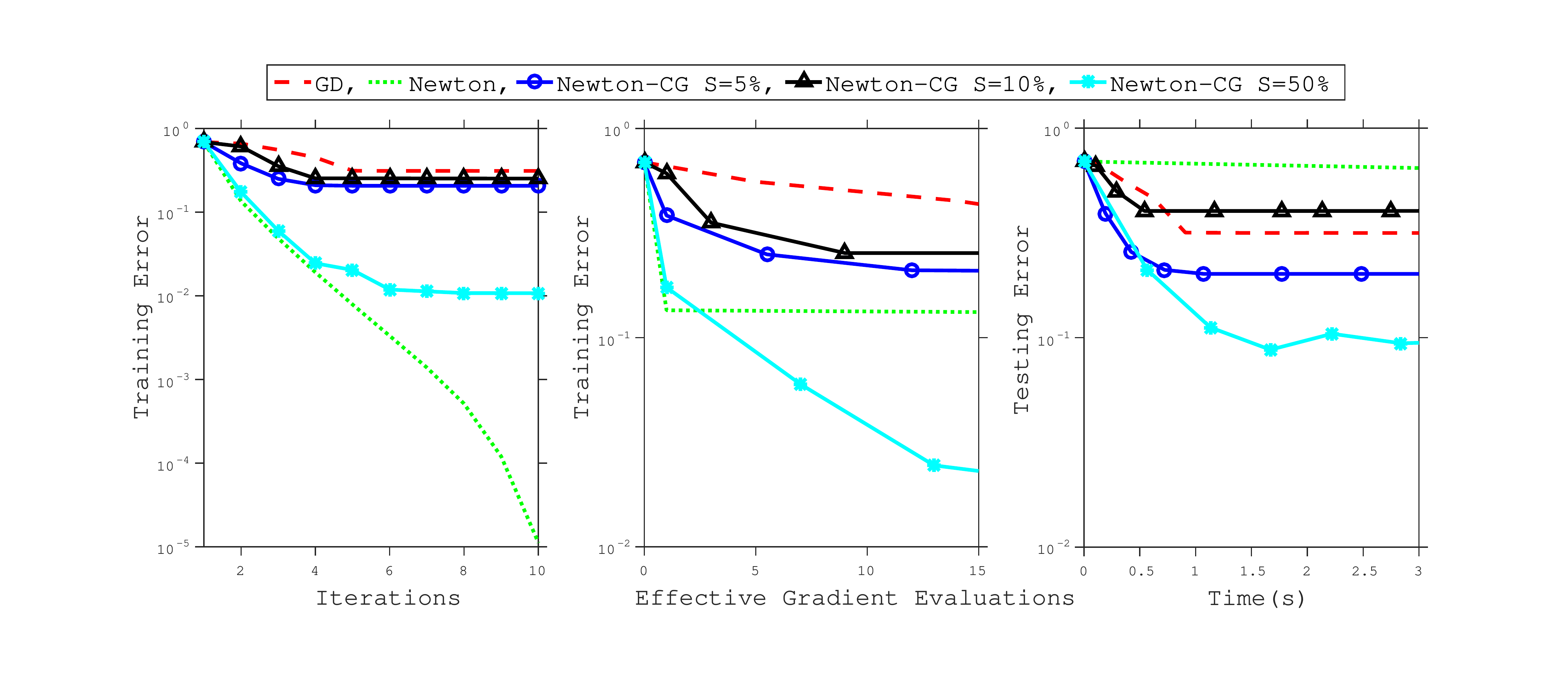}
     	\caption{ {\bf Gisette Dataset}: Performance of the inexact subsampled Newton method (Newton-CG), using three values of the sample size, and of the GD and Newton methods. Left: Training Error vs. Iterations; Middle: Training Error vs. Effective Gradient Evaluations; Right: Testing Objective vs. Time.}
     \end{figure}
     
     \begin{figure}[!htp]
     	\label{gisette-ex2} 
     	\includegraphics[width=1\linewidth]{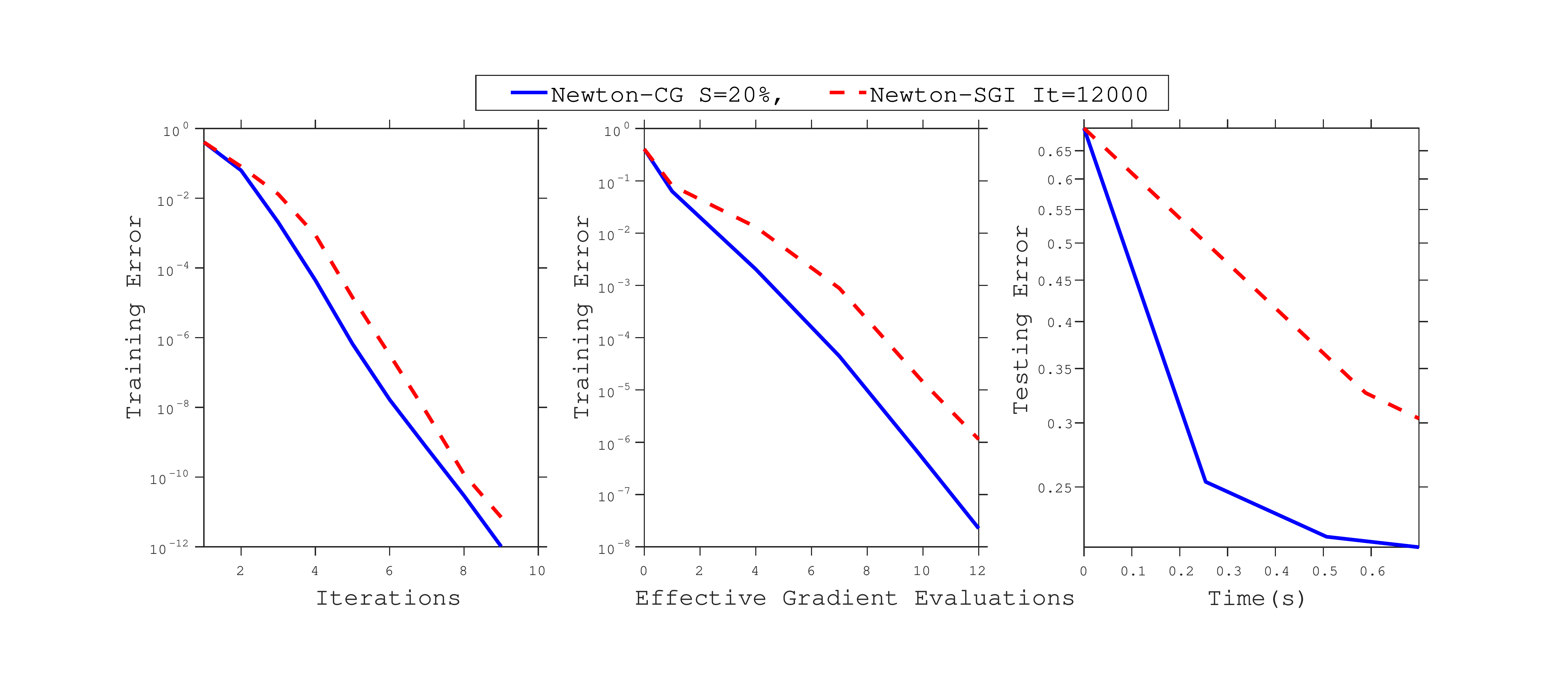}
     	\caption{ {\bf Gisette Dataset (scaled)}: Comparison of Newton-CG with Newton-SGI. Left: Training Error vs. Iterations; Middle: Training Error vs. Effective Gradient Evaluations; Right: Testing Error vs. Time.}
     \end{figure}
     
     \begin{figure}[!htp]
     	\label{gisette-ex3} 
     	\includegraphics[width=1\linewidth]{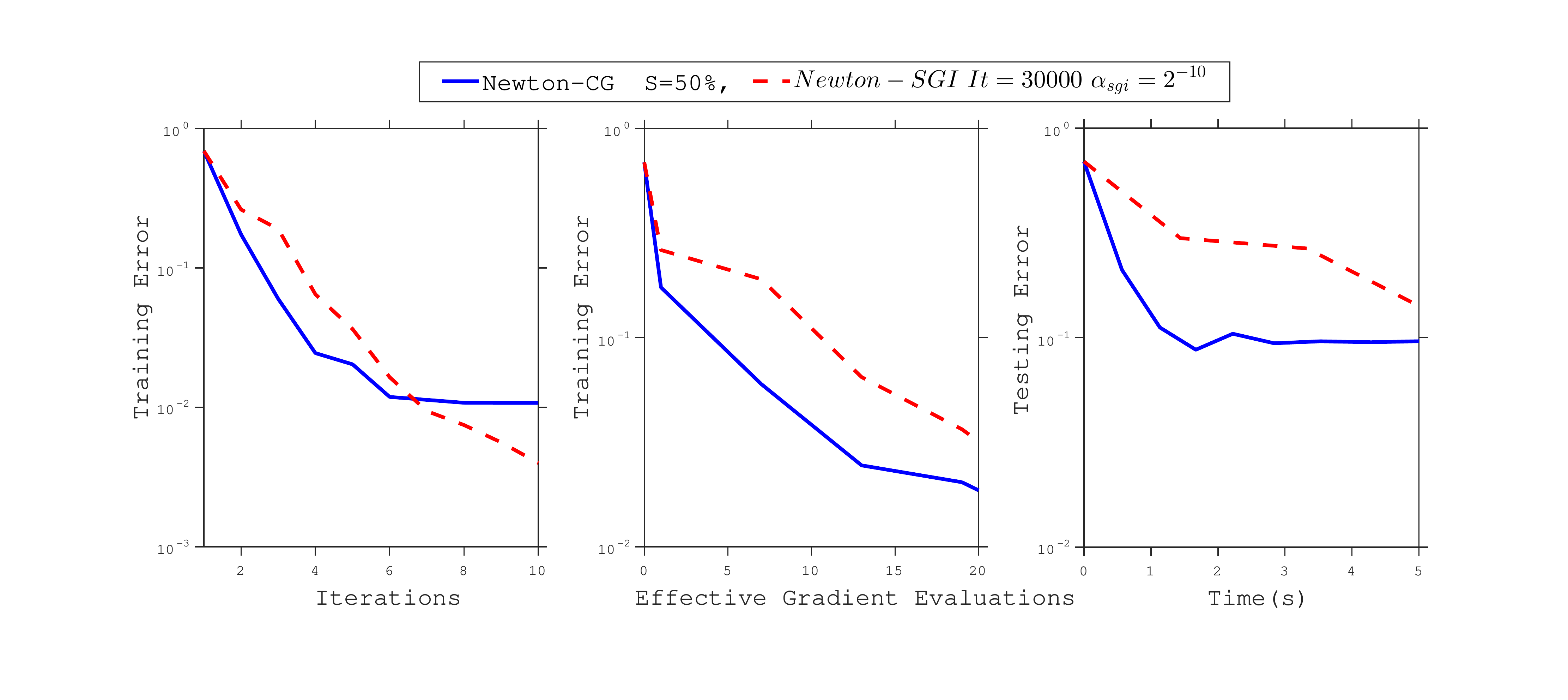}
     	\caption{ {\bf Gisette Dataset (unscaled)}: Comparison of Newton-CG with Newton-SGI. Left: Training Error vs. Iterations; Middle: Training Error vs. Effective Gradient Evaluations; Right: Testing Objective vs. Time.}
     \end{figure}
     \newpage
     
     \begin{figure}[!htp]
     	\label{covtype-ex1} 
     	\includegraphics[width=1.0\linewidth]{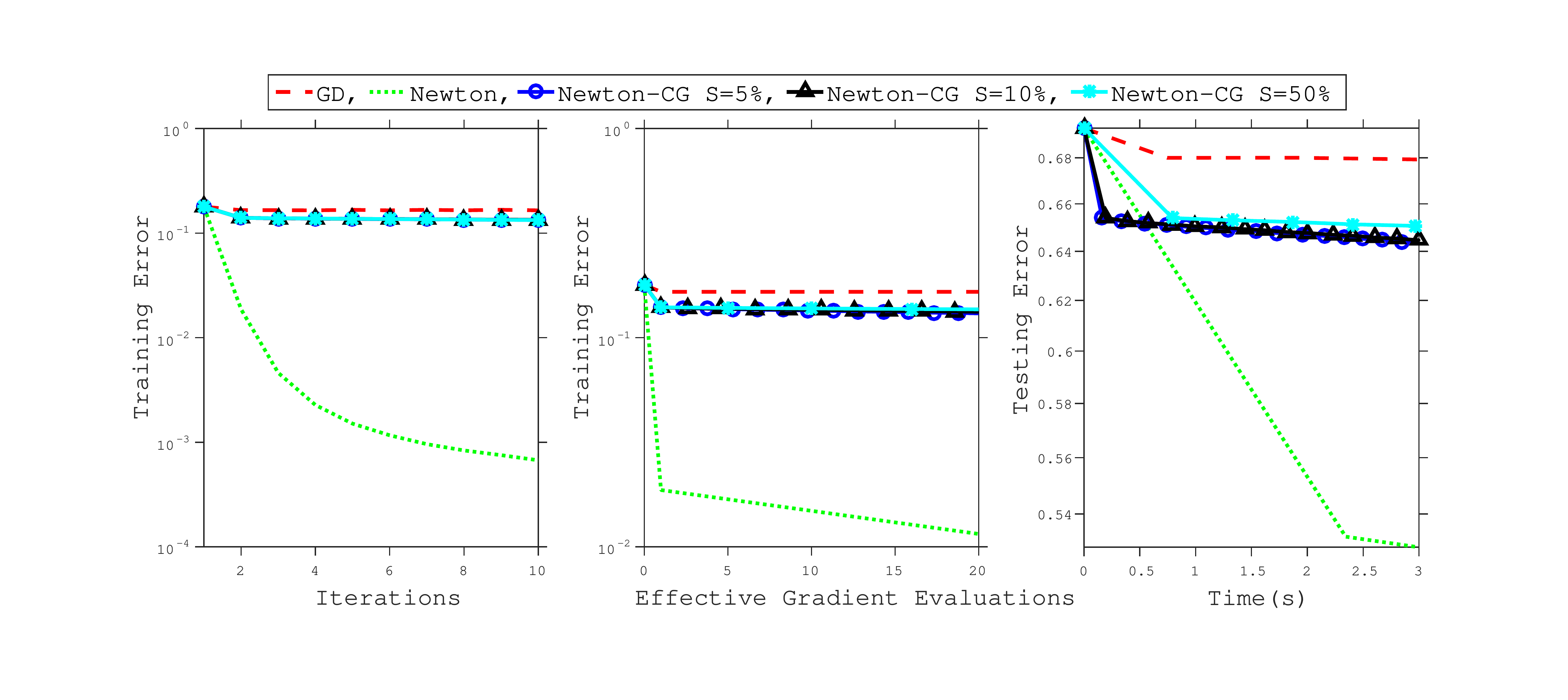}
     	\caption{ {\bf Covertype Dataset}: Performance of the inexact subsampled Newton method (Newton-CG), using three values of the sample size, and of the GD and Newton methods. Left: Training Error vs. Iterations; Middle: Training Error vs. Effective Gradient Evaluations; Right: Testing Objective vs. Time.}
     \end{figure}
     
     \begin{figure}[!htp]
     	\label{covtype-ex2} 
     	\includegraphics[width=1\linewidth]{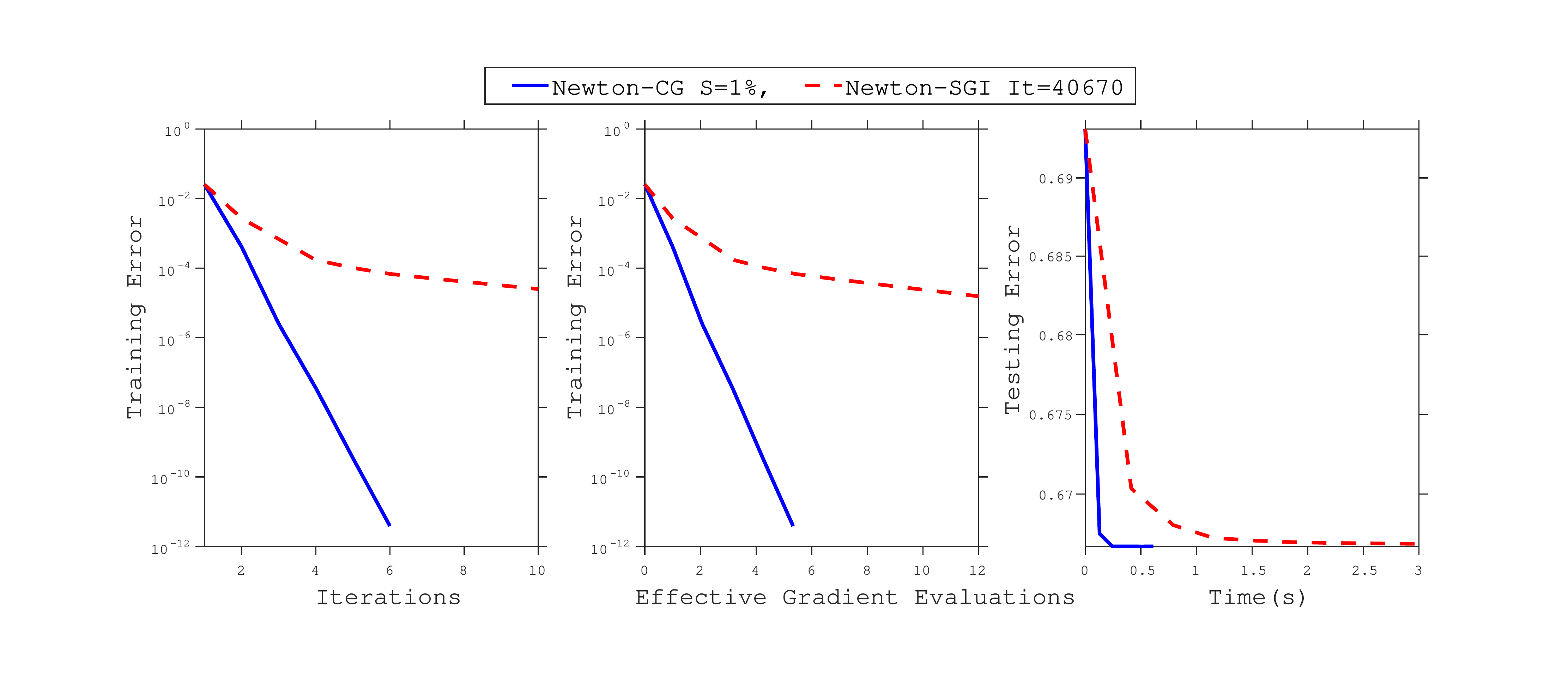}
     	\caption{ {\bf Covertype Dataset (scaled)}: Comparison of Newton-CG with Newton-SGI. Left: Training Error vs. Iterations; Middle: Training Error vs. Effective Gradient Evaluations; Right: Testing Error vs. Time.}
     \end{figure}  
     
     Note: Newton-SGI method for covertype dataset (unscaled) didn't converge for any of the steplength $\alpha_{sgi} \in \{2^{-20},...,2^3\}$

    \end{document}